\DeclareSymbolFontAlphabet{\Bbb}{AMSb}
\newtheorem*{assumptionV}{Assumption V}  
\newlength{\myleftmargin}
\DeclareSymbolFontAlphabet{\Bbb}{AMSb}
\newtheorem{theorem}{Theorem}[section]
\newtheorem{lemma}[theorem]{Lemma}
\newtheorem{corollary}[theorem]{Corollary}
\newtheorem{remark}[theorem]{Remark}
\newcommand{\Xbn}{X^{[n]}}
\newcommand{\Xsbn}{X^{(n)}}
\newlength{\fixboxwidth}
\newcommand{\mycdot}{\,\cdot\,}
\title{Convergence Rates for Realizations of Gaussian Random Variables}
\author{Daniel Winkle, Ingo Steinwart, and Bernard Haasdonk\\
University of Stuttgart\\
\small Faculty 8: Mathematics and Physics\\
\small Stuttgart, Germany \\
\texttt{\small daniel.winkle@mathematik.uni-stuttgart.de}\\
\texttt{\small ingo.steinwart@mathematik.uni-stuttgart.de}\\
\texttt{\small haasdonk@mathematik.uni-stuttgart.de}
}
\begin{document}

\maketitle

\begin{abstract}
This paper investigates the approximation of Gaussian random variables in Banach spaces, focusing on the high-probability bounds for the approximation of Gaussian random variables using finitely many observations. 
We derive non-asymptotic error bounds for the approximation of a Gaussian process $ X $ by its conditional expectation, given finitely many linear functionals. 
Specifically, we quantify the difference between the covariance of $ X $ and its finite-dimensional approximation, establishing a direct relationship between the quality of the covariance approximation and the convergence of the process in the Banach space norm. 
Our approach avoids the reliance on spectral methods or eigenfunction expansions commonly used in Hilbert space settings, and instead uses finite, linear observations. This makes our result particularly suitable for practical applications in nonparametric statistics, machine learning, and Bayesian inference. 
\end{abstract}

\textbf{Mathematical Subject Classification (2020).} Primary 60G15; Secondary 62F15

\textbf{Key Words.} Gaussian random Variables, Kernel methods, Convergence rates

\section{Introduction}\label{sec:intro}
Interpolation or kriging with Gaussian random variables is a widely used tool for analyzing functional data, with applications in computer experiments \cite{computerexperiments}, machine learning \cite{RasmussenWilliams2006}, and geostatistics \cite{geostatistic}. These methods naturally encode prior uncertainty and allow for coherent probabilistic inference, making them central to Bayesian modeling frameworks \cite{bayes}. In particular, Gaussian process models have become essential in scientific computing and statistical learning, where they are used to approximate expensive computer models \cite{KennedyOHagan}, reconstruct spatial fields \cite{spatial}, and quantify uncertainty in inverse problems \cite{Stuart2010}.

A key reason for the widespread success of Gaussian models in these domains is their analytic tractability. Gaussian distributions are fully characterized by their mean and covariance structures, even in infinite-dimensional settings such as function spaces \cite{linearconstraints}. This property enables closed-form conditioning, which is crucial for both theoretical analysis and practical computation. Additionally, many common function spaces, such as Sobolev and Hölder spaces, can be naturally related to the support or concentration sets of Gaussian measures \cite{Kuo, Lifshits}, offering a flexible and interpretable framework for regularization and prior design.

Building on these foundations, the theory of Gaussian interpolation is particularly well developed, especially with regard to posterior variances.
In the noiseless setting, kriging amounts to interpolating a function using samples drawn from a Gaussian prior, yielding a posterior process that matches observations exactly while minimizing uncertainty elsewhere. Under standard assumptions on the sampling geometry and the choice of covariance kernel, most notably for Matérn and squared exponential kernels, convergence rates with known constants for the posterior variance have been established \cite{pgreedy,daniel} and \cite[Chapter 11]{wendland}. These results rely on connections between Gaussian processes and reproducing kernel Hilbert spaces (RKHSs), and they form the theoretical foundation for adaptive sampling strategies and design of experiments. The convergence of posterior variance is particularly relevant in applications where function-valued quantities are observed only at discrete locations, such as in nonparametric regression \cite{Wahba901}, surrogate modeling \cite{RasmussenWilliams2006}, and high-dimensional inference \cite{Stuart2010}, since the posterior variance describes the uncertainity of the predictions at unobserved locations.

In this paper, we show that convergence rates of the posterior variance imply concentration of the posterior mean.
More precisely, we derive a concentration inequality that bounds the posterior mean of a Gaussian random variable with the help of the posterior variance.

In the literature, many results can be found that focus on conditioning on point evaluations, see e.g. \cite{pointregression}. In contrast, we also consider general linear measurement operators as in \cite{disintegration}. One of those linear measurement operators can be given by orthogonal projections onto eigenfunctions of integral operators. This perspective allows us to extend recent results from \cite{GPonSphere}, where we obtain the slightly sharper convergence rates as in \cite[Theorem 5.1]{GPonSphere}, but instead of almost sure convergence, we obtain a concentration inequality.

Moreover, \cite{RuiTao} established a concentration inequality for kriging, that is, for point evaluation, with misspecified stationary Gaussian processes. While their results yield faster convergence rates in the case of polynomial decay, we demonstrate that in the case of exponential convergence, our results can lead to improved rates. Further details are provided in Section \ref{sec:examples}.

Beyond projections, our framework accommodates broader classes of measurement operators, including those defined via bounded linear mappings such as partial differential operators. This generality is particularly relevant in the context of physics-informed Gaussian processes \cite{physicskrigging}, where the authors used a Monte Carlo approach to obtain approximations in various functional norms. In that setting, particular emphasis is placed on $L^2$, $L^\infty$, and Sobolev norms, which are natural when modeling physical quantities governed by differential constraints.

In comparison to our setting, much of the existing literature focuses on noisy observations. In particular, several influential results have been developed in the context of posterior contraction \cite{banachcontraction,contractionrates} and learning rates \cite{Simon}, where convergence rates typically scale no faster than $\sqrt{n}$. This limitation is largely due to the presence of observational noise. By restricting our attention to the noiseless case, we are able to obtain significantly faster rates, at the cost of narrower applicability. Nevertheless, our results complement these existing frameworks by highlighting the benefits of posterior variance decay in high-accuracy, zero-noise regimes.

Achieving these faster convergence rates, however, introduces its own technical challenges. A central difficulty is that realizations of infinite dimensional Gaussian random variables almost surely lie outside their associated RKHS \cite{driscol,parzen}. This well-known phenomenon precludes the direct application of classical kernel approximation results \cite{wendland} and necessitates addressing the so-called escaping-the-native-space problem \cite{escapingnativespace}. 
When these challenges are properly addressed, one can obtain asymptotic convergence rates faster than $\sqrt{n}$. The precise rate depends on both the smoothness of the underlying Sobolev space and the dimensionality of the domain, see \cite{maximumlikelihood}. 
The question is thus answered for Sobolev spaces, but for general RKHSs this is still an open problem.

To overcome this, we utilize a new class of scalable RKHSs introduced in \cite{samplespace}, and prove an escaping the native space estimate (Lemma~\ref{lem:inequality mainMehr}). This is combined with the Karhunen–Loève expansion and a known concentration inequality for $\chi^2$-random variables, yielding our main result.

Finally, we emphasize that our framework is expressed in terms of Gaussian random variables rather than Gaussian processes, following the distinction discussed in \cite{GRVandGP}. This choice allows for a more general formulation.

The remainder of the paper is organized as follows. In Section \ref{sec:prelim}, we introduce the basic notation and background material. Section \ref{sec:results} presents our main results, including the concentration inequality and its implications. In Section \ref{sec:examples}, we illustrate the applicability of our framework through representative examples. The proofs are collected in Section \ref{sec:proofs}. Lastly, all the technical details that do not provide further insight can be found in Appendix \ref{sec:appendix}.

\section{Preliminaries}\label{sec:prelim}\label{sec:2}

Throughout this work, $(\Omega, \mathcal{A}, \mu)$ denotes a probability space, $ E $ a separable Banach space, and $ E' $ the dual of $E$. Moreover we denote by $ B_E $ the closed unit ball in $ E $. 

\subsection*{Bochner Spaces}

Given $ 1 \leq p < \infty $ and a Banach space $ E $, the Bochner space $ L^p:=L^p(\mu,E) $ consists of all (equivalence classes of) strongly measurable functions $ X: \Omega \to E $ such that
\begin{align*}
\Vert X \Vert_{L^p(\mathcal{A},E) }^p :=\int_\Omega \Vert X(\omega) \Vert_E^p \, \textup{d} \mu(\omega) < \infty.
\end{align*}
Strong measurability here means that $ X $ is the almost everywhere limit of simple functions taking values in $ E $. For more details, we refer to \cite[Chapter 1]{martingal}. We also define for $X \in L^1(\mu,E)$ the expectation as 
\begin{align*}
\mathbb{E}X:= \int_\Omega X \, \textup{d} \mu.
\end{align*}

\subsection*{Conditional Expectation}

Let $ \mathcal{A}_0 \subset \mathcal{A} $ be a sub-$ \sigma $-algebra and let $ X \in L^1(\mathcal{A},E) $. A function $ \mathbb{E}(X|\mathcal{A}_0) \in L^1(\mathcal{A}_0,E) $ is called a conditional expectation of $ X $ with respect to $ \mathcal{A}_0 $ if
\begin{align*}
\int_A \mathbb{E}(X|\mathcal{A}_0) \, \textup{d} \mu = \int_A X \, \textup{d} \mu, \quad \text{for all } A \in \mathcal{A}_0.
\end{align*}
The existence and almost sure uniqueness (up to $ \mu $-null sets) of conditional expectations in the Banach space setting is guaranteed by \cite[Theorem 2.6.18 and Theorem 2.6.20]{martingal}.

\subsection*{Gaussian Random Variables}

A random variable $ X: \Omega \to \mathbb{R} $ is called a (one-dimensional) Gaussian random variable if there exist $ \mu_X, \sigma_X \in \mathbb{R} $ such that
\begin{align*}
\mathbb{E}\left( e^{i t X} \right) = e^{i t \mu_X - \frac{1}{2} \sigma_X^2 t^2}, \quad \text{for all } t \in \mathbb{R}.
\end{align*}
In this case, we write $ X \sim \mathcal{N}(\mu_X,\sigma_X^2) $. We note that $ \sigma_X = 0 $ yields a Dirac measure at $ \mu_X $, while $ \sigma_X > 0 $ corresponds to the usual normal distribution.

A random variable $ X: \Omega \to E $ is called Gaussian if for every $ e' \in E' $, the real-valued random variable $ e'(X) :\Omega \to \mathbb{R} $ is Gaussian. We note that separability of $E$ ensures measurability of $X$. Additionally, Fernique’s theorem ensures $X \in L^p(\mu,E) \, \text{for all } p \geq 1$, see \cite[Theorem 5.3]{Fernique53}. We call $X$ centered if $\mathbb{E}(X)=0$.

Given a sequence \( (e_j') \subset E' \), we define the conditional expectation
\begin{align}\label{eq:ConditionalRandomVariable}
\Xsbn := \mathbb{E}\left( X \mid \sigma\left( e_1'(X), \cdots , e_n'(X) \right) \right)
\end{align}
It follows from \cite[Theorem 3.10.1]{Bogachev1998} that $ \Xsbn $ is again a Gaussian random variable.

Let $ X: \Omega \to E $ be a Gaussian random variable. The \textit{covariance operator} $ \textup{cov}(X): E' \to E$ is defined by
\begin{align*}
\textup{cov}(X)(e') := \int_\Omega e'(X(\omega)-\mathbb{E}X) \cdot (X(\omega)-\mathbb{E}X) \, \textup{d} \mu(\omega), \quad \text{for all } e' \in E'.
\end{align*}

Let $ T $ be a compact metrizable space equipped with the Borel-algebra and a $ \sigma $-finite measure \( \lambda \). 
In the case where $X$ takes values in $C(T)$ the space of continuous functions on $T$, we associate a kernel function $k_X : T \times T \to \mathbb{R}$ to $X$. This kernel function is defined by
\begin{align*}
k_X(t, s) := \int_\Omega \left(X(t, \omega) - \mathbb{E}X(t, \mycdot)\right) \cdot \left(X(s, \omega) - \mathbb{E}X(s, \mycdot)\right) \, \mathrm{d}\mu(\omega).
\end{align*}
Moreover, we have
\begin{align}\label{eq:StetigeNorm}
\Vert \textup{cov}(X) - \textup{cov}(\Xsbn) \Vert_{C(T)' \to C(T)}= \sup_{t \in T} | k_X(t,t)-k_{\Xsbn}(t,t) | ,
\end{align} 
as shown in Lemma \ref{lem:stetigeNorm}. 
\section{Main Results}\label{sec:results}
In this section, we present the main results of our work. 
We begin with a general theorem that also covers the case where, at each step, the condition in \eqref{eq:ConditionalRandomVariable} is not only increased by a single additional $e_j' \in E'$ but rather by $d_j \in \mathbb{N}$ finitely many functionals $(e_{j,m}')_{m=1}^{d_j} \subset E'$. In this case, we write 
\begin{align}\label{eq:ConditioningMehr}
\Xbn:= \mathbb{E} \left (X \, | \,\sigma \left( \left( \left(e_{j,m}'(X) \right)_{m=1}^{d_j} \right)_{j=1}^n \right) \right).
\end{align}
Our main result shows that convergence rates for the posterior variance also imply convergence rates for the realizations. We then derive our other results from this theorem and in particalur we treat the cases, where the posterior variance converges at a polynomial rate and at an exponential rate.
\begin{theorem}\label{Theorem:MehrMain Result}
Let $X$ be a Gaussian random variable and $(e_{j,m}')_{m=1}^{d_j} \subset E'$. Suppose there exists a monotone decreasing sequence $(c_j) \in \ell^1(\mathbb{N})$ such that
\begin{align*}
\Vert \textup{cov}(X)-\textup{cov}(\Xbn) \Vert_{E'\to E} \leq c_n
\end{align*}
for all $n \in \mathbb{N}$.
Then, for every monotone increasing sequence $(a_j)$ satisfying $(d_j/a_j) \in \ell^1(\mathbb{N})$, $(a_j (c_{j-1}-c_j)) \in \ell^1(\mathbb{N})$, $a_j c_j \to 0$, and for all $n \in \mathbb{N}$, $\tau>0$ we have 
\begin{align*}
\mu\left( \Vert X-\Xbn \Vert_E \leq \sqrt{ 5 \max\{1,\tau\}  \left(\sum_{j=n+1}^\infty  a_j (c_{j-1}-c_j) \right) \left( \sum_{j=n+1}^\infty \frac{d_j}{a_j} \right)} \right) > 1-\mathrm{e}^{-\tau}.
\end{align*} 
\end{theorem}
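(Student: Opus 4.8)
The plan is to reduce the problem to a statement about a single Gaussian random variable living in a (possibly infinite-dimensional) space, and then to exploit the decomposition $X - \Xbn$ together with a $\chi^2$-concentration bound. First I would set up a Karhunen–Loève-type expansion adapted to the filtration $\mathcal A_n := \sigma\big(((e_{j,m}'(X))_{m=1}^{d_j})_{j=1}^n\big)$. Writing $Y_j := \Xbn[j] - \Xbn[j-1]$ (with $\Xbn[0] := \mathbb E X = 0$ after centering — and one should first reduce to the centered case, since $\mathbb E(X) - \mathbb E(\Xbn) = 0$), one gets a telescoping/orthogonal decomposition $X - \Xbn = \sum_{j>n} Y_j$ in $L^2(\mu,E)$, where the $Y_j$ are independent centered Gaussian random variables (this independence is exactly the martingale-increment structure of Gaussian conditional expectations, cf.\ the cited Bogachev result). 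The key quantitative input is that the ``size'' of $Y_j$ is controlled by $c_{j-1} - c_j$: indeed $\Vert \mathrm{cov}(Y_j)\Vert_{E'\to E} = \Vert \mathrm{cov}(\Xbn[j-1]) - \mathrm{cov}(\Xbn[j])\Vert_{E'\to E}$, and since the covariances are monotone (the conditional expectations form an increasing sequence of projections), this telescopes against the hypothesis $\Vert\mathrm{cov}(X)-\mathrm{cov}(\Xbn)\Vert \le c_n$. So $\Vert \mathrm{cov}(Y_j)\Vert_{E'\to E} \le c_{j-1}-c_j$.

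Next I would bound $\Vert Y_j\Vert_E$ pathwise. Each $Y_j$ takes values in a space spanned by at most $d_j$ directions (it is measurable with respect to the increment of the filtration, which is generated by $d_j$ functionals), so $Y_j$ can be written as $\sum_{m=1}^{d_j} g_{j,m}\, v_{j,m}$ with $(g_{j,m})$ i.i.d.\ standard Gaussian and $v_{j,m}\in E$ obtained from a (whitened) basis of the increment. The operator-norm bound on $\mathrm{cov}(Y_j)$ gives $\sum_m \Vert v_{j,m}\Vert_E^2 \le$ something like $d_j (c_{j-1}-c_j)$ — more carefully, $\langle \mathrm{cov}(Y_j) e', e'\rangle = \sum_m |e'(v_{j,m})|^2$, and taking a trace-type estimate over the $d_j$ relevant coordinates yields $\sum_m \Vert v_{j,m}\Vert_E^2 \le d_j \Vert\mathrm{cov}(Y_j)\Vert_{E'\to E} \le d_j(c_{j-1}-c_j)$. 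Hence, by the triangle inequality and Cauchy–Schwarz in $j$,
\begin{align*}
\Vert X - \Xbn\Vert_E \;\le\; \sum_{j>n}\Vert Y_j\Vert_E \;\le\; \sum_{j>n}\Big(\sum_{m=1}^{d_j} |g_{j,m}|^2\Big)^{1/2}\Big(\sum_{m=1}^{d_j}\Vert v_{j,m}\Vert_E^2\Big)^{1/2},
\end{align*}
and one further applies Cauchy–Schwarz across $j$ with the splitting $\sqrt{d_j(c_{j-1}-c_j)} = \sqrt{a_j(c_{j-1}-c_j)}\cdot\sqrt{d_j/a_j}$ to isolate the two $\ell^1$-sums appearing in the statement. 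What remains is a random variable of the form $\sum_{j>n} a_j(c_{j-1}-c_j)$ — no, rather $\sum_{j>n} (c_{j-1}-c_j)\sum_m|g_{j,m}|^2 \cdot(\text{weights})$ — a weighted $\chi^2$ sum; the weights $a_j(c_{j-1}-c_j)$ are summable by hypothesis, so one can apply a standard concentration inequality for (infinite) weighted sums of $\chi^2$ variables (Laurent–Massart style) to conclude that with probability at least $1-e^{-\tau}$ this sum is at most a constant (here $5\max\{1,\tau\}$) times its expectation, which is $\big(\sum_{j>n} a_j(c_{j-1}-c_j)\big)\big(\sum_{j>n} d_j/a_j\big)$ after the Cauchy–Schwarz bookkeeping. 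The conditions $a_jc_j\to 0$ and $(d_j/a_j),(a_j(c_{j-1}-c_j))\in\ell^1$ are exactly what is needed for the tail sums to be finite and for the telescoping boundary terms to vanish.

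The main obstacle I anticipate is the passage from the operator-norm control of $\mathrm{cov}(Y_j)$ to the pathwise bound $\sum_m\Vert v_{j,m}\Vert_E^2 \le d_j(c_{j-1}-c_j)$ in a genuine Banach space $E$: in Hilbert space this is just the trace, but in a Banach space one must be careful about what ``$d_j$ directions with covariance bounded in operator norm'' buys you for the $E$-norm of the sum. I expect this is precisely where the scalable RKHS machinery of the cited Lemma~\ref{lem:inequality mainMehr} (the escaping-the-native-space estimate) enters — it should provide the bridge converting the finite-rank covariance bound into an $E$-norm bound on the Gaussian increment, possibly at the cost of the harmless constant in front. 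A secondary technical point is justifying the a.s.\ convergence and independence of the increments $Y_j$ and the interchange of limit and norm; this should follow from Fernique's theorem (guaranteeing all moments) together with the martingale convergence theorem for Banach-valued conditional expectations cited in the preliminaries. Modulo these two points, the concentration step is the routine Laurent–Massart estimate and the rest is Cauchy–Schwarz bookkeeping.
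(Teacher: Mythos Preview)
Your architecture is right in spirit (increments $Y_j$, weighted $\chi^2$ concentration), but the deterministic step has a genuine gap, and you have misdiagnosed where the RKHS machinery enters. First, the claim $\Vert\mathrm{cov}(Y_j)\Vert_{E'\to E}\le c_{j-1}-c_j$ does not follow from the hypothesis: you only know $\sup_{e'}\sum_{l>n}\sum_m v_{l,m}(e')^2\le c_n$, and the supremum of a difference of tails is not the difference of the suprema; per increment you get at best $\le c_{j-1}$. Second, and more fundamentally, even granting your bound the route ``$\Vert\sum_j Y_j\Vert_E\le\sum_j\Vert Y_j\Vert_E$ then Cauchy--Schwarz'' is too lossy to reach the stated conclusion. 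Whichever way you distribute the weights $a_j$, the resulting $\chi^2$-sum has expectation carrying an extra factor of $d_j$ (you land on $\sum_{j>n} d_j\,a_j(c_{j-1}-c_j)$ or $\sum_{j>n} d_j^2/a_j$), which is neither what the theorem asserts nor controlled by the hypotheses. The loss comes precisely from pushing $\sup_{e'\in B_{E'}}$ \emph{inside} the sum over $j$ via the triangle inequality.

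The paper avoids both problems by never applying the triangle inequality over $j$. Working in the scalable RKHS $H_{a,V,d}$, the reproducing property (Lemma~\ref{lem:inequality mainMehr}) gives, for every $e'\in B_{E'}$,
\[
\big|(\iota X-\iota\Xbn)(e')\big|\;\le\;\Big(\sum_{j>n}a_j\sum_{m} v_{j,m}(e')^2\Big)^{1/2}\,\Vert \iota X-\iota\Xbn\Vert_{H_{a,V,d}},
\]
so the supremum over $e'$ stays \emph{outside} the $j$-sum. Abel summation (Lemma~\ref{lem:wichtigeungleichung}) then turns the uniform tail bounds $c_n$ directly into $\sup_{e'}\sum_{j>n}a_j\sum_m v_{j,m}(e')^2\le\sum_{j>n}a_j(c_{j-1}-c_j)$, with no $d_j$ and no per-increment covariance estimate needed. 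The random factor $\Vert \iota X-\iota\Xbn\Vert_{H_{a,V,d}}^2=\sum_{j>n}a_j^{-1}\sum_m r_{j,m}^2$ is handled exactly as you describe, via a Laurent--Massart bound (Corollary~\ref{cor:Concentration}), producing the $\sum_{j>n}d_j/a_j$ factor. So the escaping-the-native-space lemma is not there to justify your step~4 (which is in fact elementary: $\Vert v_{j,m}\Vert_E^2\le\Vert\mathrm{cov}(Y_j)\Vert$ holds by evaluating the covariance at a near-norming functional); it is there to \emph{replace} the lossy triangle inequality over $j$ by a single global Cauchy--Schwarz that preserves the coupling across increments.
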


In the case where the posterior variance converges at a polynomial rate, we obtain the following corollary.

\begin{corollary}\label{Corollary:PolynomHigher}
Let $X$ be a Gaussian random variable and let $(e_{j,m}') \subseteq E'$. Suppose there exist constants $C, C_d > 0$, $\alpha > 1+\beta \geq 1$, such that
\begin{align*}
\Vert \textup{cov}(X)-\textup{cov}(\Xbn) \Vert_{E'\to E} \leq C (n+1)^{-\alpha}, \qquad \textup{and} \qquad d_n \leq C_d(n+1)^{\beta}
\end{align*}
for all $n\in \mathbb{N}$. Then, for $n \geq 1$ and $\tau>0$, we have
\begin{align*}
\mu \left( \Vert X-\Xbn \Vert_{E} \leq \frac{\sqrt{20 \cdot 2^\beta C C_d \max\{1,\tau \}}}{\alpha-\beta-1} n^{\frac{-\alpha+\beta+1}{2}} \right) > 1 -\mathrm{e}^{-\tau}.
\end{align*} 
\end{corollary}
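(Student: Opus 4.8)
The plan is to derive this directly from Theorem \ref{Theorem:MehrMain Result} by choosing a suitable increasing sequence $(a_j)$ and then bounding the two series that appear in the theorem's probability estimate. Given the polynomial hypotheses $\|\textup{cov}(X)-\textup{cov}(\Xbn)\|_{E'\to E} \leq C(n+1)^{-\alpha}$ and $d_n \leq C_d(n+1)^\beta$, the natural choice is $c_n := C(n+1)^{-\alpha}$ (which is monotone decreasing and in $\ell^1$ since $\alpha>1$) and $a_j := j^{\beta+1}$ — or perhaps $a_j := (j+1)^{\beta+1}$ to match indices cleanly. One must first verify the three admissibility conditions on $(a_j)$ from the theorem: $(d_j/a_j)\in\ell^1$ holds because $d_j/a_j \lesssim j^{\beta}/j^{\beta+1} = j^{-1}$... wait, that is \emph{not} summable, so instead one needs $a_j$ to grow slightly faster, say $a_j := (j+1)^{\alpha-1}$ or an intermediate power $a_j=(j+1)^{\gamma}$ with $\beta+1<\gamma<\alpha$; then $d_j/a_j \lesssim (j+1)^{\beta-\gamma}$ is summable, $a_j c_j = C(j+1)^{\gamma-\alpha}\to 0$, and $a_j(c_{j-1}-c_j)\lesssim (j+1)^{\gamma}\cdot (j+1)^{-\alpha-1} = (j+1)^{\gamma-\alpha-1}$ is summable. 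The cleanest single choice balancing both tail sums is likely $\gamma = (\alpha+\beta+1)/2$ or, to get the stated constant $1/(\alpha-\beta-1)$, possibly $a_j$ chosen so that one tail telescopes; I would experiment to see which exponent reproduces the exact constant $\sqrt{20\cdot 2^\beta C C_d}/(\alpha-\beta-1)$.

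The key computational step is estimating the two tail series $\sum_{j=n+1}^\infty a_j(c_{j-1}-c_j)$ and $\sum_{j=n+1}^\infty d_j/a_j$ and multiplying them. For the first, use $c_{j-1}-c_j = C\bigl(j^{-\alpha}-(j+1)^{-\alpha}\bigr) \leq C\alpha j^{-\alpha-1}$ by the mean value theorem (with care about the index shift in $c_j=C(j+1)^{-\alpha}$), so $\sum_{j=n+1}^\infty a_j(c_{j-1}-c_j)$ is controlled by $C\alpha\sum_{j\geq n+1} j^{\gamma-\alpha-1}$, which by comparison with $\int_n^\infty x^{\gamma-\alpha-1}\dx x = n^{\gamma-\alpha}/(\alpha-\gamma)$ gives a bound of order $n^{\gamma-\alpha}$. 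For the second, $\sum_{j\geq n+1} d_j/a_j \leq C_d\sum_{j\geq n+1}(j+1)^{\beta-\gamma} \leq C_d\int_n^\infty x^{\beta-\gamma}\dx x = C_d n^{\beta-\gamma+1}/(\gamma-\beta-1)$, of order $n^{\beta-\gamma+1}$. Multiplying, the product of the tails is of order $n^{\gamma-\alpha}\cdot n^{\beta-\gamma+1} = n^{\beta+1-\alpha}$, so under the square root one gets $n^{(\beta+1-\alpha)/2}$, matching the claimed exponent — and crucially the exponent $\gamma$ cancels, which is why the final rate is independent of the auxiliary choice. Plugging into Theorem \ref{Theorem:MehrMain Result} with the factor $5\max\{1,\tau\}$ and collecting the constants $C$, $C_d$, the factors of $\alpha$, and a $2^\beta$ coming from bounds like $(j+1)^\beta\leq 2^\beta j^\beta$ for $j\geq 1$, should yield the stated constant, with the denominator $\alpha-\beta-1$ emerging from the integral bounds of the two tails combined.

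The main obstacle I expect is \textbf{bookkeeping of constants and index shifts} so that the final constant comes out exactly as $\sqrt{20\cdot 2^\beta C C_d\max\{1,\tau\}}/(\alpha-\beta-1)$ rather than merely up to an unspecified factor: one must pick the exponent $\gamma$ and the integral-comparison estimates so that the various denominators ($\alpha-\gamma$ from one tail, $\gamma-\beta-1$ from the other) multiply to $(\alpha-\beta-1)$ — this happens, e.g., if $\gamma$ is chosen so that $\alpha-\gamma = \gamma-\beta-1$, i.e. $\gamma=(\alpha+\beta+1)/2$, giving $(\alpha-\gamma)(\gamma-\beta-1) = (\alpha-\beta-1)^2/4$ and hence a $2$ from $\sqrt{1/4}$ that combines with the $5$ under the root to produce the $20$, while the $2^\beta$ absorbs the worst-case index shift $(j+1)^\beta\le 2^\beta j^\beta$. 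A secondary subtlety is checking monotonicity of $(c_j)$ and that all $\ell^1$-conditions of the theorem are genuinely satisfied for the chosen $\gamma$; this is routine once $\beta+1<\gamma<\alpha$ is imposed, which is exactly where the hypothesis $\alpha>1+\beta$ is used.
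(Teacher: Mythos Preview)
Your proposal is correct and follows essentially the same route as the paper's proof: set $c_n=C(n+1)^{-\alpha}$ and $a_j=j^{\gamma}$ with $\beta+1<\gamma<\alpha$, bound $c_{j-1}-c_j\le C\alpha j^{-\alpha-1}$ via the mean value theorem, control both tail sums by integral comparison, and then pick the balancing exponent $\gamma=(\alpha+\beta+1)/2$ so that $(\alpha-\gamma)(\gamma-\beta-1)=(\alpha-\beta-1)^2/4$ produces the constant $\sqrt{20\cdot 2^{\beta}CC_d\max\{1,\tau\}}/(\alpha-\beta-1)$ exactly as stated, with the $2^{\beta}$ arising from $(j+1)^{\beta}\le 2^{\beta}j^{\beta}$. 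Your anticipated ``main obstacle'' is precisely the bookkeeping the paper carries out, and you have already identified the right choices.
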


The next theorem should be viewed as a simplified version of our main result, in which only one functional is added at each conditioning step.

\begin{theorem}\label{Theorem:simple Main Result}
Let $X$ be a Gaussian random variable and $(e_j') \subset E'$ a sequence of functionals for which there exists a positive, monotone decreasing sequence $(c_j) \in \ell^1(\mathbb{N})$ such that
\begin{align}\label{eq:covariance condition}
\Vert \textup{cov}(X)-\textup{cov}(\Xsbn) \Vert_{E'\to E} \leq c_n,
\end{align}
for all $n \in \mathbb{N}$
Assume further that $(c_{j-1}-c_j)$ is monotone decreasing, $\left( \sqrt{c_{j-1}-c_j} \right)\in \ell^1(\mathbb{N})$, and $c_j/\sqrt{c_{j-1}-c_j} \to 0$. Then, for all $n \in \mathbb{N}$ and $\tau>0$ we have 
\begin{align*}
\mu \left( \Vert X-\Xsbn \Vert_E \leq \sqrt{5 \max\{1,\tau\}} \sum_{j=n+1}^\infty \sqrt{c_{j-1}-c_j} \right) > 1 -\mathrm{e}^{-\tau}.
\end{align*} 
\end{theorem}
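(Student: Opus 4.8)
The plan is to obtain Theorem~\ref{Theorem:simple Main Result} as the special case $d_j \equiv 1$ of Theorem~\ref{Theorem:MehrMain Result}. Indeed, when $d_j = 1$ for every $j$, the $\sigma$-algebra in \eqref{eq:ConditioningMehr} coincides with the one in \eqref{eq:ConditionalRandomVariable} after relabelling $e_j' := e_{j,1}'$, so $\Xbn = \Xsbn$, and the covariance hypothesis \eqref{eq:covariance condition} is precisely the hypothesis of Theorem~\ref{Theorem:MehrMain Result}. It therefore suffices to exhibit a monotone increasing weight sequence $(a_j)$ meeting the three summability/decay requirements of that theorem for which the resulting bound collapses to $\sqrt{5\max\{1,\tau\}}\sum_{j=n+1}^\infty\sqrt{c_{j-1}-c_j}$.

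The natural choice, dictated by the Cauchy--Schwarz inequality $\sum_j\sqrt{c_{j-1}-c_j} = \sum_j\sqrt{a_j(c_{j-1}-c_j)}\,\sqrt{1/a_j} \le \bigl(\sum_j a_j(c_{j-1}-c_j)\bigr)^{1/2}\bigl(\sum_j 1/a_j\bigr)^{1/2}$, with equality precisely when $a_j \propto (c_{j-1}-c_j)^{-1/2}$, is $a_j := (c_{j-1}-c_j)^{-1/2}$. Before using it, I would check that $a_j$ is well defined, i.e. that $c_{j-1}-c_j>0$: since $(c_{j-1}-c_j)$ is monotone decreasing and nonnegative, a single vanishing difference would force $c_j$ to be eventually constant, which together with the positivity of $(c_j)$ contradicts $(c_j)\in\ell^1(\mathbb{N})$; hence $c_{j-1}-c_j>0$ throughout.

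With this $a_j$, the hypotheses of Theorem~\ref{Theorem:MehrMain Result} translate one-to-one into those of Theorem~\ref{Theorem:simple Main Result}: $(a_j)$ is monotone increasing iff $(c_{j-1}-c_j)$ is monotone decreasing; the sequences $(d_j/a_j) = (\sqrt{c_{j-1}-c_j})$ and $(a_j(c_{j-1}-c_j)) = (\sqrt{c_{j-1}-c_j})$ both lie in $\ell^1(\mathbb{N})$ by assumption; and $a_jc_j = c_j/\sqrt{c_{j-1}-c_j}\to 0$ by assumption. Substituting $d_j\equiv 1$ and $a_j = (c_{j-1}-c_j)^{-1/2}$ into the conclusion of Theorem~\ref{Theorem:MehrMain Result} gives $\sum_{j=n+1}^\infty a_j(c_{j-1}-c_j) = \sum_{j=n+1}^\infty\sqrt{c_{j-1}-c_j} = \sum_{j=n+1}^\infty d_j/a_j$, so the product under the square root equals $\bigl(\sum_{j=n+1}^\infty\sqrt{c_{j-1}-c_j}\bigr)^2$ and the asserted inequality follows immediately.

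I do not expect a genuine obstacle: the argument is essentially a substitution into the already-proven Theorem~\ref{Theorem:MehrMain Result}, together with the observation that this particular weight is the Cauchy--Schwarz-optimal one and hence yields the clean bound. The only point needing a little care is the well-definedness of $(a_j)$, i.e. ruling out $c_{j-1}=c_j$, which as indicated is forced by combining monotonicity of the differences with positivity and summability of $(c_j)$; if one preferred to sidestep it, one could instead work with $a_j := (c_{j-1}-c_j+\varepsilon_j)^{-1/2}$ for a sufficiently fast-decaying $(\varepsilon_j)$ and pass to the limit $\varepsilon_j\to 0$, but this refinement is not necessary.
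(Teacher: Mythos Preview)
Your proposal is correct and matches the paper's own proof, which likewise derives the result from Theorem~\ref{Theorem:MehrMain Result} by setting $d_j=1$ and $a_j:=1/\sqrt{c_{j-1}-c_j}$. Your additional care in verifying that $c_{j-1}-c_j>0$ (so that $a_j$ is well defined) and your Cauchy--Schwarz motivation for this choice of weights are nice touches that the paper omits.
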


The following corollary specializes Corollary \ref{Corollary:PolynomHigher} to the case in which only one functional is added at each conditioning step.

\begin{corollary}\label{Corollary:Polynom}
Let $X$ be a Gaussian random variable and $(e_j') \subset E'$ be a sequence for which there exist $C>0$ and $\alpha>1$ such that
\begin{align*}
\Vert \textup{cov}(X)-\textup{cov}(\Xsbn) \Vert_{E'\to E} \leq C (n+1)^{-\alpha}
\end{align*}
for all $n\in \mathbb{N}$. Then, for $n \geq 1$ and $\tau>0$, we have
\begin{align*}
\mu \left( \Vert X-\Xsbn \Vert_E \leq \frac{ \sqrt{ 20  C  \max\{1,\tau \} } }{(\alpha-1)}    n^{\frac{1-\alpha}{2}}  \right) > 1 -\mathrm{e}^{-\tau}.
\end{align*} 
\end{corollary}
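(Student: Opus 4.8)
The plan is to obtain \Cref{Corollary:Polynom} directly as the special case of \Cref{Corollary:PolynomHigher} in which a single functional is added at each step, i.e.\ $d_n = 1$ for all $n$. First I would check that the hypotheses of \Cref{Corollary:PolynomHigher} are met: the covariance decay $\Vert \textup{cov}(X)-\textup{cov}(\Xsbn)\Vert_{E'\to E}\le C(n+1)^{-\alpha}$ is exactly assumed, and taking $d_n=1=1\cdot(n+1)^0$ we may set $C_d=1$ and $\beta=0$. The requirement $\alpha>1+\beta\ge1$ in \Cref{Corollary:PolynomHigher} then becomes precisely the assumption $\alpha>1$ of \Cref{Corollary:Polynom}, so everything is consistent.

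Next I would substitute $\beta=0$, $C_d=1$ into the conclusion of \Cref{Corollary:PolynomHigher}. The bound there reads
\begin{align*}
\frac{\sqrt{20\cdot 2^\beta C C_d \max\{1,\tau\}}}{\alpha-\beta-1}\, n^{\frac{-\alpha+\beta+1}{2}},
\end{align*}
and with these values $2^\beta=1$, $C_d=1$, $\alpha-\beta-1=\alpha-1$, and the exponent $\frac{-\alpha+\beta+1}{2}=\frac{1-\alpha}{2}$, which yields exactly
\begin{align*}
\frac{\sqrt{20\,C\,\max\{1,\tau\}}}{\alpha-1}\, n^{\frac{1-\alpha}{2}},
\end{align*}
the bound claimed in \Cref{Corollary:Polynom}. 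Since the probability statement $>1-\mathrm{e}^{-\tau}$ and the range $n\ge1$, $\tau>0$ are identical in both results, the proof is complete.

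There is essentially no obstacle here: the corollary is a pure specialization, so the only thing to be careful about is the bookkeeping of constants and exponents (checking that $2^0=1$, that $\beta$ cancels correctly in both the power of $n$ and the denominator, and that the admissibility condition $\alpha>1+\beta$ collapses to $\alpha>1$). If one instead wanted a self-contained argument, one could alternatively invoke \Cref{Theorem:simple Main Result} with the choice $c_n = C(n+1)^{-\alpha}$ and a geometric-type weight sequence $a_j$, but this would merely reproduce the computation already carried out in the proof of \Cref{Corollary:PolynomHigher}, so deriving it as a corollary of \Cref{Corollary:PolynomHigher} is the cleanest route.
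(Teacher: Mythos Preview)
Your proposal is correct and matches the paper's own proof exactly: the paper also states that the result follows directly from \Cref{Corollary:PolynomHigher} with $d_j=1$. Your bookkeeping of the constants ($C_d=1$, $\beta=0$, $2^\beta=1$, $\alpha-\beta-1=\alpha-1$, exponent $\frac{1-\alpha}{2}$) is accurate.
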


Similarly, if the posterior variance converges at an exponential rate, we obtain the following bound. 

\begin{corollary}\label{Corollary:exponent}
Let $X$ be a Gaussian random variable and $(e_j') \subset E'$ be a sequence for which there exist $C_1,C_2>0$ and $ \alpha > 1$ such that
\begin{align*}
\Vert \textup{cov}(X)-\textup{cov}(\Xsbn) \Vert_{E'\to E} \leq C_1 \mathrm{e}^{-C_2 n^{1/\alpha}}
\end{align*}
for all $n \in \mathbb{N}$. Then, for $n > \left( \frac{11}{C_2} (\alpha - 1) \right)^{\alpha}$ and $\tau>0$, we have
\begin{align*}
\mu \left( \Vert X-\Xsbn \Vert_E \leq \sqrt{ \frac{121 C_1 C_2 \alpha \max \{1,\tau\}}{20} }   \left(\frac{C_2}{2} \right)^{\alpha-2}  (n-1)^{\frac{\alpha-1}{2\alpha}}  \mathrm{e}^{-\frac{C_2}{2}(n-1)^{1/\alpha}} \right) > 1 -\mathrm{e}^{-\tau}.
\end{align*} 
For $\alpha=1$ we obtain for all $\tau>0$ and $n \in \mathbb{N}$
\begin{align*}
\mu \left( \Vert X-X_n \Vert_E \leq \sqrt{5 \max\{1,\tau \} C_1 \cdot (\mathrm{e}^{C_2} - 1) } \cdot \frac{\mathrm{e}^{C_2}}{\sqrt{\mathrm{e}^{C_2}}-1} \mathrm{e}^{-\frac{C_2}{2} n} 
\right) > 1 -\mathrm{e}^{-\tau}.
\end{align*} 
\end{corollary}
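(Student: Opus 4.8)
The plan is to derive both parts of the statement from \Cref{Theorem:simple Main Result}, applied with
\[
  c_n := C_1\,\mathrm{e}^{-C_2 n^{1/\alpha}},
\]
which by hypothesis satisfies \eqref{eq:covariance condition}. With this choice the proof reduces to two tasks: (a) checking the structural assumptions on $(c_j)$ required by \Cref{Theorem:simple Main Result}, and (b) bounding the resulting tail sum $S_n := \sum_{j=n+1}^{\infty}\sqrt{c_{j-1}-c_j}$ from above by the quantity in the statement.

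For (a), positivity and $(c_j)\in\ell^1(\mathbb N)$ are clear. The crucial point is that $g(t):=C_1\mathrm{e}^{-C_2 t^{1/\alpha}}$ is \emph{convex} on $[0,\infty)$: writing $h(t):=t^{1/\alpha}$ one computes $g''=C_1C_2\,\mathrm{e}^{-C_2 h}\bigl(C_2(h')^2-h''\bigr)\ge 0$, since $h$ is concave for $\alpha\ge 1$. Hence $g'$ is increasing and negative, so $c_{j-1}-c_j=\int_{j-1}^{j}|g'|$ is decreasing in $j$; this gives the monotonicity of $(c_{j-1}-c_j)$. For the two remaining conditions I would use the two-sided estimate
\[
  \frac{C_1C_2}{\alpha}\,j^{\frac1\alpha-1}\mathrm{e}^{-C_2 j^{1/\alpha}}
  \;\le\; c_{j-1}-c_j \;\le\;
  \frac{C_1C_2}{\alpha}\,(j-1)^{\frac1\alpha-1}\mathrm{e}^{-C_2 (j-1)^{1/\alpha}},
\]
where the lower bound is $|g'(j)|\le\int_{j-1}^{j}|g'|$ and the upper bound follows from $1-\mathrm{e}^{-x}\le x$ together with the mean value estimate $j^{1/\alpha}-(j-1)^{1/\alpha}\le\frac1\alpha(j-1)^{1/\alpha-1}$ (using that $t\mapsto t^{1/\alpha-1}$ is nonincreasing). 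Taking square roots then shows $\bigl(\sqrt{c_{j-1}-c_j}\bigr)\in\ell^1(\mathbb N)$ and $c_j/\sqrt{c_{j-1}-c_j}\to 0$, so \Cref{Theorem:simple Main Result} applies and yields $\mu\bigl(\|X-X^{(n)}\|_E\le\sqrt{5\max\{1,\tau\}}\,S_n\bigr)>1-\mathrm e^{-\tau}$.

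For (b) in the case $\alpha>1$: inserting the upper estimate and reindexing gives $S_n\le\sqrt{C_1C_2/\alpha}\sum_{k=n}^{\infty}k^{\frac{1-\alpha}{2\alpha}}\mathrm e^{-\frac{C_2}{2}k^{1/\alpha}}$. Since this summand is decreasing in $k$, bound the sum by $\int_{n-1}^{\infty}x^{\frac{1-\alpha}{2\alpha}}\mathrm e^{-\frac{C_2}{2}x^{1/\alpha}}\,\mathrm dx$; after the substitution $u=x^{1/\alpha}$ this becomes a multiple of the incomplete-Gamma-type integral $\int_{(n-1)^{1/\alpha}}^{\infty}u^{\frac{\alpha-1}{2}}\mathrm e^{-\frac{C_2}{2}u}\,\mathrm du$. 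Splitting $\mathrm e^{-\frac{C_2}{2}u}$ into a "monotonising" factor and an integrable factor, one obtains a tail bound of the form $\mathrm{const}\cdot a^{\frac{\alpha-1}{2}}\mathrm e^{-\frac{C_2}{2}a}$ with $a=(n-1)^{1/\alpha}$, \emph{provided} $a$ exceeds a fixed multiple of $(\alpha-1)/C_2$ — which is exactly what the hypothesis $n>\bigl(\tfrac{11}{C_2}(\alpha-1)\bigr)^{\alpha}$ guarantees, the margin being generous enough to absorb the $-1$ and the various monotonicity thresholds. Recombining powers of $n-1$ produces the exponent $\frac{\alpha-1}{2\alpha}$, and carrying the multiplicative constants through this chain — together with the factor $\sqrt{5\max\{1,\tau\}}$ from the theorem — yields the stated prefactor $\sqrt{121 C_1 C_2\alpha/20}\,(C_2/2)^{\alpha-2}$. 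For $\alpha=1$ the estimate is exact: here $c_{j-1}-c_j=C_1(\mathrm e^{C_2}-1)\mathrm e^{-C_2 j}$, so $S_n$ is the geometric series $\sqrt{C_1(\mathrm e^{C_2}-1)}\sum_{j=n+1}^{\infty}\mathrm e^{-C_2 j/2}=\sqrt{C_1(\mathrm e^{C_2}-1)}\,\frac{\mathrm e^{-C_2(n+1)/2}}{1-\mathrm e^{-C_2/2}}$, which after the crude bound $\frac{\mathrm e^{-C_2(n+1)/2}}{1-\mathrm e^{-C_2/2}}\le\frac{\mathrm e^{C_2}}{\sqrt{\mathrm e^{C_2}}-1}\mathrm e^{-C_2 n/2}$ and an application of \Cref{Theorem:simple Main Result} gives the claim.

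The main obstacle is not conceptual but one of careful bookkeeping: establishing the monotonicity of $(c_{j-1}-c_j)$ cleanly (for which the convexity of $t\mapsto C_1\mathrm e^{-C_2 t^{1/\alpha}}$ is the decisive observation), and then carrying the explicit constants — and the precise admissibility threshold on $n$ — through the chain "mean value bound $\to$ integral comparison $\to$ substitution $\to$ incomplete-Gamma tail estimate" without sacrificing the leading exponential factor $\mathrm e^{-\frac{C_2}{2}(n-1)^{1/\alpha}}$.
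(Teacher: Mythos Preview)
Your proposal is correct and follows essentially the same route as the paper: apply \Cref{Theorem:simple Main Result} with $c_n=C_1\mathrm e^{-C_2 n^{1/\alpha}}$, use convexity of $t\mapsto C_1\mathrm e^{-C_2 t^{1/\alpha}}$ to get monotonicity of $(c_{j-1}-c_j)$, verify the remaining hypotheses via mean-value estimates on $c_{j-1}-c_j$, then bound the tail sum by an integral, substitute $u=x^{1/\alpha}$, and invoke an incomplete-Gamma tail bound for $\alpha>1$ (respectively sum the geometric series for $\alpha=1$). The only cosmetic difference is that the paper packages the $\alpha>1$ tail estimate as a separate appendix lemma and cites an external incomplete-Gamma inequality for the constant $11/10$, whereas you sketch the same bound via the standard ``split the exponential'' trick; both lead to the stated prefactor and the threshold $n>\bigl(\tfrac{11}{C_2}(\alpha-1)\bigr)^{\alpha}$.
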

\begin{remark}
Assume that $E=C(T)$ and that the covariance kernel of $X$ can be written as $k_X(t,s)=\kappa(t-s)$ for all $t,s \in T$, where $\kappa: \mathbb{R}^d \to \mathbb{R}$. In addition assume that there exists an $0< \alpha \leq 1$ such that
\begin{align}\label{eq:RuiCondition}
\int_{\mathbb{R}^d} \Vert \omega \Vert^\alpha \hat{\kappa}(\omega) \, \textup{d} \mu < \infty,
\end{align} 
where $\hat{\kappa}$ denotes the Fourier transform of $\kappa$. 
If we consider conditioning with respect to point evaluations and assume that \eqref{eq:covariance condition} holds then the results in \cite{RuiTao} lead to the concentration inequality 
\begin{align*}
\mu \left( \Vert X-X_n \Vert_{C(T)} \leq C \left(\sqrt{c_n}\ln\left( \frac{1}{c_n} \right) + \sqrt{\tau c_n} \right)  \right) > 1 -\mathrm{e}^{-\tau},
\end{align*}
for all $n \in \mathbb{N}$ and all $\tau>0$ where $C>0$ is a suitable constant. This is comparable to Theorem \ref{Theorem:simple Main Result}. We will discuss in Section \ref{sec:examples}, when which result is better.
\end{remark}

\section{Examples}\label{sec:examples}\label{sec:4}

We focus our examples on Gaussian random variables taking their values in the Banach space $E=C([0,1]^d)$. For simplicity, we assume that $ \mathbb{E}(X) = 0 $ and describe the Gaussian random variable $X$ using the covariance operator defined on point evaluations $ \delta_t(X)(\omega) := X_t(\omega):=X(t,\omega) $ for $ \omega \in \Omega$ and $ t \in [0,1]^d$.

\subsection{Sobolev Covariance}
Let $ X: \Omega \to C([0,1]^d) $ be a centered Gaussian random variable with kernel function given by
\begin{align*} 
k_X(t,s) = \frac{2^{1-(s-d/2)}}{\Gamma(s-d/2)} \| r - t \|^{s-d/2} K_{s-d/2} (\| r - t \|),
\end{align*}
where $s>d$, $ K_{s-d/2} $ is a modified Bessel function of the second kind, and $ \Gamma $ is the Gamma function. This kernel is known as the Matérn kernel, whose RKHS is a Sobolev space $ H^s([0,1]^d) $, see \cite{Solin2020}. 

Assume that we have chosen points $ (t_n) \in [0,1]^d $, such that for some constant $ C > 0 $, we have
\begin{align*}
\sup_{t \in [0,1]^d} | k_X(t,t) - k_{\Xsbn}(t,t) | \leq C (n+1)^{-\frac{2s}{d}+1}, \quad \textup{for all} \, \, n \in \mathbb{N}.
\end{align*}
Such points can be constructed by algorithms like $P$-greedy, see \cite[Corollary 2.2]{pgreedy}. Applying \eqref{eq:StetigeNorm} and Corollary \ref{Corollary:Polynom} leads to
\begin{align*}
\mu\left( \Vert X - \Xsbn \Vert_{C([0,1]^d)} \leq \frac{\sqrt{20 C \max\{1,\tau\}}}{\frac{2s}{d}-2} n ^{-\frac{s}{d}+1} \right) > 1- \mathrm{e}^{-\tau}
\end{align*} 
for $n\geq 1$ and $\tau>0$. 
We note that this converges to zero only if \( s > d \), recall that this condition also ensures that almost all realizations of $X$ are contained in the Sobolev space $H^r([0,1]^d)$ for all $r \in (d/2,s-d/2)$, see \cite[Example 5.6]{IngoGPpath}. 

We note that the convergence rates provided in \cite{RuiTao} lead to a convergence rate of 
\begin{align}\label{eq:sobolevfaster}
\mu \left( \Vert X - \Xsbn \Vert_{C([0,1]^d)} \leq C \left( n^{-\frac{s}{d}+\frac{1}{2}} \sqrt{\ln(n)} +\sqrt{\tau} n^{-\frac{s}{d}+\frac{1}{2}} \right) \right)> 1 - \mathrm{e}^{-\tau} ,
\end{align}
for some $C>0$. The rate in \eqref{eq:sobolevfaster} is faster than the one provided in our setting, however it only holds for stationary processes satisfying \eqref{eq:RuiCondition}.
\color{black}
\subsection{Gaussian Covariance}
Let $ X: \Omega \to C([0,1]^d)$ be a centered Gaussian random variable with the kernel function 
\begin{align*}
k_X(t,s) = \mathrm{e}^{-\| t - s \|_2^2}, \quad t,s \in [0,1]^d.
\end{align*}
We identify points $t_n$ with the functionals $ \delta_{t_n} $, meaning that the conditional expectation $ \Xsbn $ is given by 
\begin{align*}
\Xsbn = \mathbb{E}(X \mid \sigma((X_{t_j})_{j=1}^n)).
\end{align*}
Assume that the points $ (t_n) \in [0,1]^d $ are chosen such that there exist constants $C_1,C_2>0$ with
\begin{align*}
\sup_{t \in [0,1]^d} | k_X(t,t) - k_{\Xsbn}(t,t) | \leq C_1 \mathrm{e}^{-C_2 n^{1/d}}, \quad \textup{for all} \, \, n \in \mathbb{N}.
\end{align*}
Such points do exist and can be constructed, for instance using $P$-greedy algorithms, see \cite[Corollary 2.2]{pgreedy}. We then apply \eqref{eq:StetigeNorm} and Corollary \ref{Corollary:exponent} to obtain 
\begin{align*}
\mu\left( \Vert X - \Xsbn \Vert_{C([0,1]^d)} \leq \sqrt{ \frac{ 121 C_1 C_2 d \max\{1,\tau\} }{20}} \left(\frac{C_2}{2} \right)^{d-2} (n-1)^{\frac{d-1}{2d}} \mathrm{e}^{-\frac{C_2}{2}(n-1)^{1/d}} \right) > 1 -\mathrm{e}^{-\tau},
\end{align*}
for $d>1$, $ n > \left( \frac{11}{C_2} (d-1) \right)^d + 1 $, and $\tau>0$.

For $d=1$, $ n \in \mathbb{N}$, and $\tau>0$ we obtain 
\begin{align*}
\mu \left( \Vert X-X_n \Vert_E \leq \sqrt{5 \max\{1,\tau \} C_1 \cdot (\mathrm{e}^{C_2} - 1) } \cdot \frac{\mathrm{e}^{C_2}}{\sqrt{\mathrm{e}^{C_2}}-1} \mathrm{e}^{-\frac{C_2}{2} n} 
\right) > 1 -\mathrm{e}^{-\tau}.
\end{align*}

We note that the convergence rates provided in \cite{RuiTao} lead to a convergence rate of 
\begin{align}\label{eq:gausfaster}
\mu \left( \Vert X - \Xsbn \Vert_{C([0,1]^d)} \leq C \left( \mathrm{e}^{-\frac{C_2}{2} n^{1/d}} \cdot n^{\frac{1}{2d}} + \sqrt{\tau} \mathrm{e}^{-\frac{C_2}{2} n^{1/d }} \right) \right)> 1 - \mathrm{e}^{-\tau} 
\end{align}
for some $C>0$. The rate in \eqref{eq:gausfaster} is faster if $d\geq 3$ and for $d=2$ the rates coincide. For the case $d=1$ our setting provides an improvement in the realistic case $\tau \leq n^{1/d}$. Note, however that these improvements are only obtained for stationary processes satisfying \eqref{eq:RuiCondition}.
\color{black}

\subsection{Conditioning on Eigenfunctions}

In the case of $E=L^2(T)$, it is often more natural to condition on the eigenfunctions of the covariance operator rather than on point evaluations. Recall that for a separable Banach space, the covariance operator is nuclear, symmetric and positive, see \cite{CovarianceSymmetric}.  Let $(\lambda_j)$ denote the monotone decreasing eigenvalues and $(e_j)$ denote the eigenfunctions. By conditioning on $e_j':=\langle e_j, \mycdot \rangle_{L^2(\lambda)} \in L^2(\lambda)'$, we have by Lemma \ref{lem:Eigenvalue matches Norm}
\begin{align*}
\Vert \textup{cov}(X)-\textup{cov}(\Xsbn) \Vert_{L^2(T) \to L^2(T)} = \lambda_{n+1}.
\end{align*}
If we further assume that there exist constants $C>0$ and $\alpha>1$ such that $\lambda_{n+1} \leq C (n+1)^{-\alpha}$, as is the case for Sobolev spaces, then Corollary \ref{Corollary:Polynom} yields
\begin{align*}
\mu \left( \Vert X-\Xsbn \Vert_E \leq  \frac{ \sqrt{20 C \max\{1,\tau\}}}{\alpha-1}   n^{(1-\alpha)/2}  \right) > 1 -\mathrm{e}^{-\tau}.
\end{align*}

\subsection{Gaussian random variables on the product of spheres}
%
We now make the previous eigenfunction-based example more concrete by considering a product of spheres $T = \mathbb{S}^{\textbf{d}_1} \times \mathbb{S}^{\textbf{d}_2}$, following the setup in \cite{GPonSphere}. We note, $\mathbb{S}^{d} \subset \mathbb{R}^{d+1}$ denotes the sphere and $d,\textbf{d}_1,\textbf{d}_2 \in \mathbb{N}$.  To avoid repeating the full harmonic analysis framework, we refer the reader to \cite{GPonSphere}, but recall the key elements needed for our discussion.

Let $\Delta$ denote the Laplace-Beltrami operator on $\mathbb{S}^d$ with eigenvalues $\mu_j$ and corresponding eigenspaces $H_j(d)$. The dimension of $H_j(d)$ is denoted by $D_j(d)$, with $D_j(1)=2$ and, for $d \geq 2$,
\begin{align*}
D_j(d)= \textup{dim}(H_j(d))=(2j+d-1)\frac{j+d-2}{j!(d-1)!} \leq c_d (j+1)^{d-1}
\end{align*} 
for some $c_d\geq 1$, see \cite[Equation (5.2)]{GPonSphere}. We denote by $S_{j,l}^{d}$ the classical spherical harmonics on $\mathbb{S}^d$, see \cite{sphericalharmonic}, which form an orthonormal basis (ONB) of $H_j(d)$. 

For $\textbf{j}=(j_1,j_2) \in \mathbb{N}^2$ and $\textbf{m} =(m_1,m_2) \in \mathbb{N}^2$, let $r_{\textbf{j},\textbf{m}} \sim \mathcal{N}(0,1)$ be i.i.d., and let $B_{\textbf{j}}>0$ satisfy
\begin{align}\label{eq:EigenvaluedecaySphere}
\sum_{\textbf{j} \in \mathbb{N}_0^2} B_{\textbf{j}} D_{j_1}(\textbf{d}_1) D_{j_2}(\textbf{d}_2) < \infty.
\end{align}
 We define the Gaussian random variable 
\begin{align*}
X:= \sum_{\textbf{j} \in \mathbb{N}_0^2} \sqrt{B_{\textbf{j}}} \sum_{m_1=1}^{D_{k_1}(\textbf{d}_1)} \sum_{m_2=1}^{D_{k_2}(\textbf{d}_2)}  r_{\textbf{j},\textbf{m}} S_{j_1,m_1}^{\textbf{d}_1} \cdot S_{j_2,m_2}^{\textbf{d}_2}.
\end{align*}
By construction, the covariance operator of $X$ has eigenvectors $S_{j_1,m_1}^{\textbf{d}_1} , S_{j_2,m_2}^{\textbf{d}_2}$ with eigenvalues $B_{\mathbf{j}}$, so that
\begin{align*}
\Vert \textup{cov}(X) \Vert_{L^2(\lambda) \to L^2(\lambda)} = \max_{\mathbf{j}} |B_{\mathbf{j}}|.
\end{align*}
By conditioing $X$ on $\left( S_{j_1,m_1}^{\textbf{d}_1} \cdot S_{j_2,m_2}^{\textbf{d}_2} \right)_{\textbf{m}=(1,1)}^{\left( D_{j,1}(\textbf{d}_1),D_{j_2}(\textbf{d}_2) \right)}$ for $|\textbf{j}| \leq n$, we have $d_n=\sum_{l=0}^n D_{n-l}(\textbf{d}_1) D_l(\textbf{d}_2)$. Furthermore, by Lemma \ref{lem:Eigenvalue matches Norm} we obtain
\begin{align*}
\Xbn = \sum_{| \textbf{j} | \leq n} \sqrt{ B_{\textbf{j}}} \sum_{m_1=1}^{D_{j_1}(\textbf{d}_1)} \sum_{m_2=1}^{D_{j_2}(\textbf{d}_2)}  r_{\textbf{j},\textbf{m}} S_{j_1,m_1}^{\textbf{d}_1} \cdot S_{j_2,m_2}^{\textbf{d}_2}.
\end{align*}
For $\alpha,C>0$, let the coefficients satisfy
\begin{align}\label{eq:eigenwertdecay}
B_\textbf{j} \leq C (1+ | \textbf{j} | )^{-2\alpha-\textbf{d}_1-\textbf{d}_2},
\end{align}
where we note that \eqref{eq:eigenwertdecay} implies \eqref{eq:EigenvaluedecaySphere}.
By Lemma \ref{lem:Eigenvalue matches Norm} we have
\begin{align*}
\Vert \textup{cov}(X)-\textup{cov}(\Xbn) \Vert_{L^2(\lambda)\to L^2(\lambda)} \leq C (1+ n )^{-2\alpha-\textbf{d}_1-\textbf{d}_2}.
\end{align*}
Counting the functionals one adds in each step we estimate $d_n$ by
\begin{align*}
d_n=\sum_{l=0}^n D_{n-l}(\textbf{d}_1) D_{l}(\textbf{d}_2) \leq \sum_{l=0}^n c_{\textbf{d}_1} c_{\textbf{d}_2} (n-l+1)^{\textbf{d}_1-1} (l+1)^{\textbf{d}_2-1} \leq c_{\textbf{d}_1} c_{\textbf{d}_2} (1+n)^{\textbf{d}_1+\textbf{d}_2-1}.
\end{align*}
Applying Corollary \ref{Corollary:PolynomHigher}, we obtain the concentration inequality 
\begin{align*}
\mu \left(  \Vert X-\Xbn \Vert_{L^2(\lambda)} \leq \frac{1}{2 \alpha} \sqrt{20 \cdot 2^{\textbf{d}_1+\textbf{d}_2-1} Cc_{\textbf{d}_1} c_{\textbf{d}_2} \max\{1,\tau \}} n^{-\alpha}  \right) > 1-\mathrm{e}^{-\tau}.
\end{align*}
In \cite[Theorem 5.1]{GPonSphere} the authors have shown an asymptotic almost surely bound $ \Vert X-\Xbn \Vert_{L^2(\lambda)} \leq n^{-\gamma}$ for all $0<\gamma < \alpha$. In contrast, we obtain the 'limiting' $n^{-\alpha}$ as convergence rate but instead of holding almost surely, our bound is formulated in terms of a concentration inequality with explicit constants.

\section{Proofs of main results}\label{sec:proofs}\label{sec:5}
The proof of Theorem \ref{Theorem:MehrMain Result} is organized in three main steps.
First, we introduce the \emph{reproducing covariance space} $H_X$ and demonstrate that the convergence rates derived in our results also hold for scalable RKHS $H_{a,V,d}$. For a detailed treatment of scalable RKHS, we refer the reader to \cite{samplespace}.
Next, we establish a crucial concentration inequality and show that the Gaussian random variable $X$ belongs to $H_{a,V,d}$ almost surely.
Finally, we combine these ingredients to prove Theorem \ref{Theorem:MehrMain Result}. Once this is established, Theorem \ref{Theorem:simple Main Result} and all corollaries follow directly.

\subsection*{Reproducing Covariance Space}
Given a Gaussian random variable $X$, we define the reproducing covariance space $H_X \subset E''$ as the RKHS of the kernel $k:B_{E'} \times B_{E'} \to \mathbb{R}$ given by 
\begin{align*}
k(e_1',e_2') := \langle \textup{cov}(X) e_1',e_2' \rangle_{E,E'}.
\end{align*}
Since we require that $E$ is separable, it follows from \cite[Lemma 8.2.3]{DW} that $H_X$ is separable.
For $j \in \mathbb{N}$, $d_j \in \mathbb{N}$ and $1 \leq m \leq d_j$ let $(v_{j,m}) \subseteq H_X$  be an ONB. Then, for all $e_1',e_2' \in B_{E'}$, we have $k(e'_1,e_2')=\sum_{j=1}^\infty \sum_{m=1}^{d_j} v_{j,m}(e'_1) v_{j,m}(e'_2)$, see \cite[Theorem 4.20]{svm}. 
The following lemma is essentail in proving the main result.
\begin{lemma}\label{lem:wichtigeungleichung}
Let $(v_{j,m}) \subseteq H_X$ be an ONB, and let $a_j > 0$ be a monotone increasing sequence, and let $c_j>0$ be a montone decreasing sequence, such that
\begin{align}\label{eq:standardbedingung}
\sup_{e' \in B_{E'}} \sum_{j=n+1}^\infty \sum_{m=1}^{d_j} v_{j,m}^2(e') \leq c_n ,
\end{align}
$a_j c_j \to 0$, and $\left(a_j \cdot (c_{j-1}-c_j) \right) \in \ell^1(\mathbb{N})$. We then have for all $e' \in B_{E'}$
\begin{align*}
\sum_{j=n+1}^\infty a_j \sum_{m=1}^{d_j} v_{j,m}(e')^2 \leq \sum_{j=n+1}^\infty a_j (c_{j-1}-c_j) < \infty.
\end{align*}
\end{lemma}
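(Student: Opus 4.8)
The key identity is a summation-by-parts (Abel summation) argument applied to the tail sums. Write $S_j(e') := \sum_{m=1}^{d_j} v_{j,m}(e')^2 \geq 0$ and for $N > n$ let $R_N(e') := \sum_{j=N+1}^\infty S_j(e')$, so that hypothesis \eqref{eq:standardbedingung} reads $R_N(e') \leq c_N$ for all $N \geq n$, and in particular $R_N(e') \to 0$ since $c_N \to 0$ (because $(c_j) \in \ell^1$). Note $S_j(e') = R_{j-1}(e') - R_j(e')$. The plan is to compute
\begin{align*}
\sum_{j=n+1}^\infty a_j S_j(e') = \sum_{j=n+1}^\infty a_j \bigl(R_{j-1}(e') - R_j(e')\bigr)
\end{align*}
by Abel summation. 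On the finite range $j = n+1, \dots, N$ this telescopes to
\begin{align*}
\sum_{j=n+1}^N a_j \bigl(R_{j-1}(e') - R_j(e')\bigr) = a_{n+1} R_n(e') - a_N R_N(e') + \sum_{j=n+1}^{N-1} (a_{j+1} - a_j) R_j(e').
\end{align*}
Since $a_j$ is increasing, $a_{j+1} - a_j \geq 0$, and since $R_j(e') \leq c_j$ and also $R_n(e') \leq c_n$, each term can be bounded above by the corresponding term with $c$ in place of $R(e')$; the boundary term $-a_N R_N(e')$ is $\leq 0$ and can be dropped. This yields
\begin{align*}
\sum_{j=n+1}^N a_j S_j(e') \leq a_{n+1} c_n + \sum_{j=n+1}^{N-1} (a_{j+1} - a_j) c_j.
\end{align*}

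Next I would re-run Abel summation in the other direction on the right-hand side to recognize it as a telescoping tail of $a_j(c_{j-1} - c_j)$. Concretely, applying summation by parts to $\sum a_j(c_{j-1}-c_j)$ over a finite range gives the analogous identity with $c$ replacing $R(e')$ and with equality; comparing the two shows that $a_{n+1}c_n + \sum_{j=n+1}^{N-1}(a_{j+1}-a_j)c_j = \sum_{j=n+1}^{N} a_j (c_{j-1}-c_j) + a_N c_N$. Letting $N \to \infty$ and using $a_N c_N \to 0$ (a hypothesis), the partial sums on the left converge (monotonically, as all terms are nonnegative) and are bounded by $\sum_{j=n+1}^\infty a_j(c_{j-1}-c_j)$, which is finite by the hypothesis $(a_j(c_{j-1}-c_j)) \in \ell^1(\mathbb{N})$. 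This gives exactly the claimed bound, uniformly in $e' \in B_{E'}$ since every inequality used was pointwise in $e'$ with an $e'$-independent right-hand side.

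The main obstacle is purely bookkeeping: keeping the two Abel-summation passes consistent and correctly handling the boundary terms $a_N R_N(e')$ and $a_N c_N$ in the limit $N \to \infty$. One must verify that the left-hand partial sums genuinely converge before passing to the limit — this is immediate here because $S_j(e') \geq 0$, so monotone convergence applies and the finite bound established above shows the limit is finite. A minor point to state carefully is that the chain of inequalities is valid for every fixed $e' \in B_{E'}$ with a bound that does not depend on $e'$, so the conclusion holds with the supremum over $e'$ implicitly absorbed; no further uniformity argument is needed. No deeper analytic input (no interchange of limits requiring dominated convergence beyond nonnegativity, no properties of $H_X$ other than $v_{j,m}^2 \geq 0$) is required.
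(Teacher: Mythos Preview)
Your proof is correct and follows essentially the same route as the paper: two passes of Abel summation on the tail $R_j(e')=\overline P_j^2(e')$, first rewriting $\sum a_j S_j$ in terms of $(a_{j+1}-a_j)R_j$, then bounding $R_j\le c_j$ and undoing the summation by parts to recover $\sum a_j(c_{j-1}-c_j)$, with the boundary terms handled via $a_Nc_N\to 0$. One tiny remark: your parenthetical ``$c_N\to 0$ because $(c_j)\in\ell^1$'' cites a hypothesis not present in this lemma (it appears only in the later theorem); the correct justification here is $a_Nc_N\to 0$ together with $a_N\ge a_1>0$, but this side comment is not actually used in your argument.
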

\begin{proof}
Set $\overline{P}^2_n(e'):=\sum_{j=n+1}^\infty \sum_{m=1}^{d_j} v_{j,m}^2(e')$. We then have
\begin{align*}
 \sum_{j=n+1}^\infty a_j \sum_{m=1}^{d_j} v_{j,m}^2(e')
= \sum_{j=n+1}^\infty a_j \cdot \left( \overline{P}_{j-1}^2(e') - \overline{P}_{j}^2(e') \right). 
\end{align*}
For a finite sum with $l>n+1$, we compute 
\begin{align*}
\sum_{j=n+1}^l a_j \cdot \left( \overline{P}_{j-1}^2(e') - \overline{P}_{j}^2(e') \right)  
&=\sum_{j=n+1}^l a_j \overline{P}^2_{j-1}(e')-\sum_{j=n+1}^l a_j \overline{P}_j^2(e')\\
&=a_{n+1}\overline{P}_n^2(e') - a_l \overline{P}_l^2(e') + \sum_{j=n+2}^l a_j \overline{P}_{j-1}^2(e')-\sum_{j=n+1}^{l-1} a_j \overline{P}_j^2(e') \\
&=a_{n+1}\overline{P}_n^2(e') - a_l \overline{P}_l^2(e') + \sum_{j=n+1}^{l-1} a_{j+1} \overline{P}_{j}^2(e') -\sum_{j=n+1}^{l-1} a_j \overline{P}_j^2(e') \\
&= a_{n+1}\overline{P}_n^2(e') - a_l \overline{P}_l^2(e') + \sum_{j=n+1}^{l-1} \overline{P}_j^2(e')\left(a_{j+1}-a_j \right) \\
&\leq a_{n+1}c_n - a_l \overline{P}_l^2(e') + \sum_{j=n+1}^{l-1} c_j(a_{j+1}-a_j),
\end{align*}
where in the last step we used  the monotonicity of $(a_j)$.
By applying summation of parts on the right hand side we obtain 
\begin{align*}
a_{n+1}c_n - a_l \overline{P}_l^2(e') + \sum_{j=n+1}^{l-1} c_j(a_{j+1}-a_j) 
&= a_{n+1}c_n - a_l \overline{P}_l^2(e') + \sum_{j=n+1}^{l-1} c_j a_{j+1} - \sum_{j=n+1}^{l-1} c_j a_j \\
&= a_{n+1}c_n - a_l \overline{P}_l^2(e') + \sum_{j=n+2}^l     c_{j-1} a_j - \sum_{j=n+1}^{l-1} c_j a_j  \\
&= c_l a_l - c_l a_l - a_l \overline{P}_l^2(e') + \sum_{j=n+1}^l     c_{j-1} a_j - \sum_{j=n+1}^{l-1} c_j a_j  \\
&= c_l a_l  - a_l \overline{P}_l^2(e')  + \sum_{j=n+1}^l     c_{j-1} a_j - \sum_{j=n+1}^{l} c_j a_j \\
&= c_l a_l  - a_l \overline{P}_l^2(e')  + \sum_{j=n+1}^l     a_j \left(c_{j-1} - c_j \right).
\end{align*}
Since $0 \leq a_l \overline{P}_l^2(e') \leq a_l c_l$ and by assumption $a_lc_l \to 0$ we have $a_l \overline{P}_l^2(e') \to 0$. Taking the limit $l \to \infty$ gives
\begin{align*}
\sum_{j=n+1}^\infty a_j \cdot \left( \overline{P}_{j-1}^2(e') - \overline{P}_{j}^2(e') \right)   \leq  \sum_{j=n+1}^\infty a_j \left(c_{j-1}-c_j \right).
\end{align*}
\end{proof}
Note that the key assumption in Lemma \ref{lem:wichtigeungleichung} is \eqref{eq:standardbedingung}, which, as we will see later, implies a convergence rate of the posterior variance.
 Moreover, \eqref{eq:standardbedingung} with the conditions that there exist a monotone increasing sequence $a_j>0$ with $(a_j (c_{j-1}-c_j)) \in \ell^1(\mathbb{N})$ and $a_j c_j \to 0$ allows us to define a kernel $k_{a,V,d}:B_{E'} \times B_{E'} \to \mathbb{R}$ by
\begin{align*}
k_{a,V,d}(e_1',e_2') := \sum_{j=1}^\infty a_j \sum_{m=1}^{d_j} v_{j,m}(e'_1)v_{j,m}(e'_2),
\end{align*}  
see \cite[Lemma 4.2]{svm}. We denote the associated RKHS by $H_{a,V,d}$.
Furthermore, we recall $H_X \subseteq H_{a,V,d}$, see \cite[Proposition 3.3]{samplespace}, and that $(\sqrt{a_j} v_{j,m}) \subset H_{a,V,d}$ is an ONB, see \cite[Proposition 3.3]{samplespace}

Let $V_n:=\textup{span} \{ v_{j,m} \, | \, 1 \leq j\leq n, \, 1 \leq m \leq d_j \}$ and $\Pi_{n,X}:H_{X} \to H_{X}$ denote the orthogonal projection onto $V_n$ in $H_X$. Similarly, let $\Pi_n: H_{a,V,d} \to H_{a,V,d}$ be the orthogonal projection onto $V_n$ in $H_{a,V,d}$. It follows that these two projections coincide on $V_n$, as the following lemma shows.

\begin{lemma}
Let $\Pi_{n,X}: H_X \to H_X$ be the orthogonal projection in $H_X$ onto $V_n:= \textup{span} \{ v_{j,m} \, | \,1 \leq j\leq n, \, 1 \leq m \leq d_j \}$ and $\Pi_n: H_{a,V,d} \to H_{a,V,d}$ be the orthogonal projection in $H_{a,V,d}$ onto the same $V_n$. Then, for all $f \in H_X \subseteq H_{a,V,d}$, we have
\begin{align*}
\Pi_{n,X} f = \Pi_n f.
\end{align*}
\end{lemma}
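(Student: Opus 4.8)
The plan is to exploit the fact that both $H_X$ and $H_{a,V,d}$ share the same family $(v_{j,m})$ as an orthogonal system, only with different normalizations: $(v_{j,m})$ is an ONB of $H_X$, while $(\sqrt{a_j}\,v_{j,m})$ is an ONB of $H_{a,V,d}$. Since $V_n = \operatorname{span}\{v_{j,m} : 1\le j\le n,\ 1\le m\le d_j\}$ is spanned by finitely many of these common basis vectors, the orthogonal projection onto $V_n$ in either space should just pick out the corresponding ``coordinates'' of $f$ in that shared system, and these coordinates agree up to the $a_j$-rescaling that cancels.

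Concretely, I would first fix $f \in H_X \subseteq H_{a,V,d}$ and expand it in the ONB of $H_X$ as $f = \sum_{j,m} \langle f, v_{j,m}\rangle_{H_X}\, v_{j,m}$, so that $\Pi_{n,X} f = \sum_{j=1}^n \sum_{m=1}^{d_j} \langle f, v_{j,m}\rangle_{H_X}\, v_{j,m}$ by definition of the orthogonal projection onto a subspace spanned by part of an ONB. Next I would expand the same $f$ in the ONB $(\sqrt{a_j}\,v_{j,m})$ of $H_{a,V,d}$, giving $f = \sum_{j,m} \langle f, \sqrt{a_j}\,v_{j,m}\rangle_{H_{a,V,d}}\, \sqrt{a_j}\,v_{j,m} = \sum_{j,m} a_j \langle f, v_{j,m}\rangle_{H_{a,V,d}}\, v_{j,m}$, hence $\Pi_n f = \sum_{j=1}^n \sum_{m=1}^{d_j} a_j \langle f, v_{j,m}\rangle_{H_{a,V,d}}\, v_{j,m}$. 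So it suffices to show the scalar identity $\langle f, v_{j,m}\rangle_{H_X} = a_j \langle f, v_{j,m}\rangle_{H_{a,V,d}}$ for each $j,m$.

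To prove that scalar identity, I would use the reproducing property together with the fact that $v_{j,m} \in H_X$ has a concrete representation: since $(v_{j,m})$ is an ONB of $H_X$ and $H_X$ is the RKHS of $k$ with $k(\cdot,e_2') = \sum_{j,m} v_{j,m}(\cdot)\,v_{j,m}(e_2')$, one checks $v_{j,m} = \langle v_{j,m}, k(\cdot, e')\rangle$-type identities; more directly, the coefficient $\langle f, v_{j,m}\rangle_{H_X}$ is the $(j,m)$-th generalized Fourier coefficient of $f$ as an element of $H_X$, while in $H_{a,V,d}$ the corresponding coefficient with respect to $v_{j,m}$ (not yet normalized) carries a factor $a_j$ because $\|v_{j,m}\|_{H_{a,V,d}}^2 = a_j$ and the inclusion $H_X \hookrightarrow H_{a,V,d}$ is norm-nonincreasing in a way compatible with the diagonal structure. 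The cleanest route is probably to verify $\langle v_{j,m}, v_{k,l}\rangle_{H_{a,V,d}} = a_j\,\delta_{jk}\delta_{ml}$ (immediate from $(\sqrt{a_j}v_{j,m})$ being orthonormal in $H_{a,V,d}$) and then write, for $f = \sum_{k,l} \langle f, v_{k,l}\rangle_{H_X} v_{k,l}$, the inner product $\langle f, v_{j,m}\rangle_{H_{a,V,d}} = \sum_{k,l}\langle f, v_{k,l}\rangle_{H_X}\langle v_{k,l}, v_{j,m}\rangle_{H_{a,V,d}} = a_j \langle f, v_{j,m}\rangle_{H_X}$, which is exactly the claim, provided the expansion of $f$ converges in $H_{a,V,d}$ as well (it does, since convergence in $H_X$ implies convergence in the larger, coarser-normed space $H_{a,V,d}$).

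The main obstacle I anticipate is the interchange of the infinite sum with the $H_{a,V,d}$-inner product: one must justify that the $H_X$-convergent expansion $f = \sum_{k,l}\langle f, v_{k,l}\rangle_{H_X} v_{k,l}$ also converges in $H_{a,V,d}$-norm, so that the inner product can be taken term by term. This follows because the inclusion $H_X \subseteq H_{a,V,d}$ is continuous (indeed $\|g\|_{H_{a,V,d}} \le \|g\|_{H_X}$ on $H_X$ since $a_j \ge a_1 > 0$ would need care — actually one uses that $H_X \hookrightarrow H_{a,V,d}$ is bounded, as recorded via \cite[Proposition 3.3]{samplespace}), hence $H_X$-convergence of the partial sums implies $H_{a,V,d}$-convergence, and continuity of the inner product finishes the argument. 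Everything else is bookkeeping with orthonormal expansions.
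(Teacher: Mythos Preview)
Your approach is essentially the paper's: both arguments expand $f$ in the ONB $(v_{j,m})$ of $H_X$ and then recognise the truncated sum as the orthogonal projection in $H_{a,V,d}$ via the rescaled ONB $(\sqrt{a_j}\,v_{j,m})$. The paper simply writes $\Pi_{n,X}f=\sum_{j\le n}\sum_m b_{j,m}v_{j,m}=\sum_{j\le n}\sum_m (b_{j,m}/\sqrt{a_j})(\sqrt{a_j}v_{j,m})=\Pi_n f$ in one line, whereas you isolate the underlying scalar identity $\langle f,v_{j,m}\rangle_{H_X}=a_j\langle f,v_{j,m}\rangle_{H_{a,V,d}}$.

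There is, however, an arithmetic slip in your verification of that identity: since $\sqrt{a_j}\,v_{j,m}$ has unit norm in $H_{a,V,d}$, one has $\langle v_{j,m},v_{k,l}\rangle_{H_{a,V,d}}=a_j^{-1}\delta_{jk}\delta_{ml}$, not $a_j\,\delta_{jk}\delta_{ml}$. With the correct value your term-by-term computation gives $\langle f,v_{j,m}\rangle_{H_{a,V,d}}=a_j^{-1}\langle f,v_{j,m}\rangle_{H_X}$, which is precisely the identity you were after; as written, your line yields the reciprocal relation and does not match the claim. Once this is fixed, the proof is complete. Your explicit remark that the $H_X$-convergent expansion also converges in $H_{a,V,d}$ (via the continuous inclusion $H_X\hookrightarrow H_{a,V,d}$, which gives $\|g\|_{H_{a,V,d}}^2=\sum b_{j,m}^2/a_j\le a_1^{-1}\|g\|_{H_X}^2$) justifies the term-by-term inner product and is a point the paper leaves implicit.
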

\begin{proof}
For $f \in H_X \subseteq H_{a,V,d}$ there exists a sequence $(b_{j,m})$ with $\sum_{j=1}^\infty \sum_{m=1}^{d_j} b_{j,m}^2 < \infty$ such that $f=\sum_{j=1}^\infty \sum_{m=1}^{d_j} b_{j,m} v_{j,m}$. Since $(\sqrt{a_j}v_{j,m})$ is an ONB of $H_{a,V,d}$. We conclude
\begin{align*}
\Pi_{n,X} f = \sum_{j=1}^n \sum_{m=1}^{d_j} b_{j,m} v_{j,m} = \sum_{j=1}^n \sum_{m=1}^{d_j} \frac{b_{j,m}}{\sqrt{a_j}} (\sqrt{a_j} v_{j,m}) = \Pi_n f.
\end{align*}
\end{proof}

It is essential that the orthogonal projections $\Pi_{n,X}$ and $\Pi_n$ coincide. In the context of interpolation, computing $\Pi_{n,X}$ involves inverting the kernel matrix. Since the projections are the same, we can use the same kernel matrix to compute $\Pi_n$. Consequently, the precise values of $k_{a,V,d}$ are never needed.

The next lemma is closely related to the escaping-the-native-space problem and is essential for proving our main result.

\begin{lemma}\label{lem:inequality mainMehr}
Let $(v_{j,m}) \subseteq H_X$ be an ONB, and let $a_j > 0$ be a monotone increasing sequence. Moreover, let $c_j>0$ be a monotone decreasing sequence, such that \eqref{eq:standardbedingung} is satisfied and we have both $a_j c_j \to 0$ and $\left(a_j \cdot (c_{j-1}-c_j) \right) \in \ell^1(\mathbb{N})$. Then, for all $f\in H_{a,V,d}$ the following inequality holds
\begin{align*}
\Vert f - \Pi_n f \Vert_{E''}^2 \leq \sum_{j=n+1}^\infty a_j (c_{j-1}-c_j) \cdot \Vert f - \Pi_n f  \Vert_{H_{a,V,d}}^2,
\end{align*}
where $\Pi_n$ denotes the orthogonal projection onto $V_n:=\textup{span} \{ v_{j,m} \, | \, 1\leq  j\leq n, \, 1 \leq m \leq d_j \}$ in $H_{a,V,d}$.
\end{lemma}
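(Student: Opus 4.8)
I want to prove, for arbitrary $f \in H_{a,V,d}$ and all $e' \in B_{E'}$, the pointwise bound
\[
|(f - \Pi_n f)(e')|^2 \leq \Bigl(\sum_{j=n+1}^\infty a_j(c_{j-1}-c_j)\Bigr) \, \|f - \Pi_n f\|_{H_{a,V,d}}^2,
\]
and then take the supremum over $e' \in B_{E'}$ on the left, which is exactly $\|f - \Pi_n f\|_{E''}^2$ since $H_{a,V,d} \subseteq E''$ (viewing elements of $E''$ as functions on $B_{E'}$). The main tool is the reproducing property in $H_{a,V,d}$ together with the explicit ONB $(\sqrt{a_j}\, v_{j,m})$.

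\textbf{Key steps.} First I would write $g := f - \Pi_n f$. Since $g$ lies in the orthogonal complement of $V_n$ in $H_{a,V,d}$, its expansion in the ONB $(\sqrt{a_j}\, v_{j,m})$ involves only indices $j \geq n+1$: say $g = \sum_{j=n+1}^\infty \sum_{m=1}^{d_j} \beta_{j,m} \,(\sqrt{a_j}\, v_{j,m})$, so that $\|g\|_{H_{a,V,d}}^2 = \sum_{j=n+1}^\infty \sum_{m=1}^{d_j} \beta_{j,m}^2$. Second, evaluate at $e' \in B_{E'}$: by the reproducing property, $g(e') = \langle g, k_{a,V,d}(\cdot, e')\rangle_{H_{a,V,d}}$, and since $k_{a,V,d}(\cdot,e') = \sum_{j,m} \sqrt{a_j}\, v_{j,m}(e')\,(\sqrt{a_j}\, v_{j,m})$ (reading off coefficients against the ONB), we get $g(e') = \sum_{j=n+1}^\infty \sum_{m=1}^{d_j} \beta_{j,m} \sqrt{a_j}\, v_{j,m}(e')$. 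Third, apply Cauchy--Schwarz:
\[
|g(e')|^2 \leq \Bigl(\sum_{j=n+1}^\infty \sum_{m=1}^{d_j} \beta_{j,m}^2\Bigr) \cdot \Bigl(\sum_{j=n+1}^\infty a_j \sum_{m=1}^{d_j} v_{j,m}(e')^2\Bigr) = \|g\|_{H_{a,V,d}}^2 \cdot \Bigl(\sum_{j=n+1}^\infty a_j \sum_{m=1}^{d_j} v_{j,m}(e')^2\Bigr).
\]
Fourth, invoke Lemma~\ref{lem:wichtigeungleichung}, whose hypotheses are exactly the ones assumed here, to bound the second factor by $\sum_{j=n+1}^\infty a_j(c_{j-1}-c_j)$ uniformly in $e' \in B_{E'}$. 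Combining and taking the supremum over $e'$ gives the claim.

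\textbf{Anticipated obstacle.} The analytic content is already packaged in Lemma~\ref{lem:wichtigeungleichung}, so the remaining work is mostly bookkeeping, but two points need care. One is justifying the interchange of summation and inner product when evaluating $g(e')$ and when expanding $k_{a,V,d}(\cdot,e')$ in the ONB; this is legitimate because $k_{a,V,d}(\cdot,e') \in H_{a,V,d}$ (finite norm, since $\sum_j a_j \sum_m v_{j,m}(e')^2 < \infty$ by Lemma~\ref{lem:wichtigeungleichung}) and Parseval applies. The other subtlety is the identification $\|h\|_{E''} = \sup_{e' \in B_{E'}} |h(e')|$ for $h \in H_{a,V,d} \subseteq E''$, i.e.\ that the RKHS functions on $B_{E'}$ genuinely are (restrictions of) elements of $E''$ with matching norm; this follows from the construction of $H_X \subseteq E''$ and $H_X \subseteq H_{a,V,d}$ recalled above, but it is worth stating explicitly. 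Neither of these is a deep difficulty; the proof is short once the ONB description and Lemma~\ref{lem:wichtigeungleichung} are in hand.
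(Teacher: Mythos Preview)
Your proposal is correct and follows essentially the same route as the paper: reproducing property, Cauchy--Schwarz, then Lemma~\ref{lem:wichtigeungleichung} to bound $\sum_{j\ge n+1} a_j \sum_m v_{j,m}(e')^2$, and finally a supremum over $e'\in B_{E'}$. The only cosmetic difference is that the paper obtains the key factor via the projection identity $(\mathrm{Id}-\Pi_n)^2=\mathrm{Id}-\Pi_n$ applied to $k_{a,V,d}(\cdot,e')$ and then cites an external lemma to identify $\|k_{a,V,d}(\cdot,e')-\Pi_n k_{a,V,d}(\cdot,e')\|_{H_{a,V,d}}^2=\sum_{j\ge n+1} a_j\sum_m v_{j,m}(e')^2$, whereas you reach the same quantity directly by expanding $g$ and $k_{a,V,d}(\cdot,e')$ in the ONB $(\sqrt{a_j}\,v_{j,m})$.
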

\begin{proof}
Let $\textup{Id}:H_{a,V,d} \to H_{a,V,d}$ denote the identity mapping. By the reproducing property of $H_{a,V,d}$ and the fact that $\Pi_n$ is an orthogonal projection, which we use in the second and third equation, we have
\begin{align*}
(f- \Pi_n f )(e') = \langle k_{a,V,d}(\mycdot,e') , f- \Pi_n f \rangle_{H_{a,V,d}}  
&=\langle k_{a,V,d}(\mycdot,e') , (\textup{Id}-\Pi_n)^2  f \rangle_{H_{a,V,d}} \\
&= \langle k_{a,V,d}(\mycdot,e') - \Pi_n k_{a,V,d}(\mycdot,e') , f- \Pi_n f \rangle_{H_{a,V,d}}.
\end{align*}
Applying the Cauchy-Schwarz inequality gives 
\begin{align*}
| (f- \Pi_n f )(e') | \leq \Vert k_{a,V,d}(\mycdot,e') - \Pi_n k_{a,V,d}(\mycdot,e') \Vert_{H_{a,V,d}} \Vert  f- \Pi_n f \Vert_{H_{a,V,d}}. 
\end{align*}
Set $\overline{P}^2_n(e'):=\sum_{j=n+1}^\infty \sum_{m=1}^{d_j} v_{j,m}^2(e')$. By applying \cite[Lemma 2.3]{pgreedy} to the kernel $k_{a,V,d}$, we have
\begin{align*}
\Vert k_{a,V,d}(\mycdot,e') - \Pi_n k_{a,V,d}(\mycdot,e') \Vert_{H_{a,V,d}}^2 
= \sum_{j=n+1}^\infty a_j \sum_{m=1}^{d_j} v_{j,m}^2(e').
\end{align*}
By Lemma \ref{lem:wichtigeungleichung} we obtain 
\begin{align*}
\sum_{j=n+1}^\infty a_j \sum_{m=1}^{d_j} v_{j,m}(e')^2 \leq \sum_{j=n+1}^\infty a_j (c_{j-1} -c_j).
\end{align*}
Finally, putting everything together and taking $\sup_{e'\in B_{E'}}$ leads to the assertion.
\end{proof}

We note that in \eqref{eq:ConditioningMehr} we have a sequence $(e'_{j,m}) \subset E'$, and by \cite[Sections 2.2 and 2.3]{daniel} we have
\begin{align}\label{eq:hjm}
h_{j,m}:=\int_\Omega e'_{j,m}(X) \iota X \, \textup{d} \mu \in H_X,
\end{align}
where $\iota: E \to E''|_{B_E'}$ denotes the canonical embedding into the bidual space, which is then restricted to the unit ball. We use the sequence $(e'_{j,m})$ to construct an orthonormal system in $H_X$. This leads to the following assumption.
\begin{assumptionV}\label{ass:V}
Given a sequence $(e_{j,m}') \subseteq E'$, we define $h_{j,m}$ by \eqref{eq:hjm} and let $(v_{j,m}) \subseteq H_X$ be an ONB of $\overline{\textup{span}\{h_{j,m}\}} \subseteq H_X$ satisfying
\begin{align*}
\textup{span}\{v_{j,m} \, | 1 \leq j \leq n, \, 1 \leq m \leq d_j \}= \textup{span}\{h_{j,m} \, | 1 \leq j \leq n , \,  1 \leq m \leq d_j \}
\end{align*}
 for all $n$.
\end{assumptionV}

\begin{remark}
Under \hyperref[ass:V]{Assumption V}, if $\textup{span} \{ e_{j,m}' \, | \, j \in \mathbb{N}, \, 1 \leq m \leq d_j\}$ is $\textup{weak}^*$-dense in $E'$, then by \cite[Lemma 8.2.3]{DW}, the family $(v_{j,m})$ forms an ONB of $H_X$.
\end{remark}

\begin{lemma}\label{lem:ONB}
If \hyperref[ass:V]{Assumption V} is satisfied, then there exist independent $\mathcal{N}(0,1)$ distributed random variables $r_{j,m} :\Omega \to \mathbb{R}$ such that $\int_\Omega r_{j,m} \iota X \, \textup{d} \mu = v_{j,m}$ and
\begin{align*}
 \iota \Xbn = \sum_{j=1}^n \sum_{m=1}^{d_j} r_{j,m} v_{j,m}.
\end{align*} 
Moreover, if $ \Vert \textup{cov}(X) - \textup{cov}(\Xbn) \Vert_{E' \to E} \to 0$, we also have 
\begin{align*}
\iota X = \sum_{j=1}^\infty \sum_{m=1}^{d_j} r_{j,m} v_{j,m}.
\end{align*}
\end{lemma}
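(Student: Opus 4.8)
The plan is to build the Gaussian random variables $r_{j,m}$ directly from the functionals, exploiting the isometry between $H_X$ and the closure of $E'$ acting on $X$ inside $L^2(\mu)$. Concretely, recall from \cite[Sections 2.2 and 2.3]{daniel} that the map $e' \mapsto h_{e'} := \int_\Omega e'(X)\,\iota X\,\textup{d}\mu$ extends to an isometry $\mathcal{I}$ from the Gaussian Hilbert space $\overline{\{e'(X) : e' \in E'\}} \subseteq L^2(\mu)$ onto $H_X$, with $\mathcal{I}(e'(X)) = h_{e'}$ and $\langle \mathcal{I}(g_1), \mathcal{I}(g_2)\rangle_{H_X} = \int_\Omega g_1 g_2\,\textup{d}\mu$ for centered jointly Gaussian $g_1,g_2$. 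First I would apply $\mathcal{I}^{-1}$ to the ONB $(v_{j,m})$ from Assumption~V: this produces real random variables $r_{j,m} := \mathcal{I}^{-1}(v_{j,m})$ lying in the Gaussian Hilbert space, hence jointly Gaussian and centered, and orthonormal in $L^2(\mu)$ — so they are independent $\mathcal{N}(0,1)$ variables. By construction $\int_\Omega r_{j,m}\,\iota X\,\textup{d}\mu = \mathcal{I}(r_{j,m}) = v_{j,m}$, which is the first claimed identity.

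Next I would identify the partial sums with $\iota\Xbn$. The key point is that $\iota\Xbn = \mathbb{E}(\iota X \mid \mathcal{A}_n)$ where $\mathcal{A}_n := \sigma((e'_{j,m}(X))_{j\le n, m\le d_j})$, and that conditional expectation of a Gaussian vector on a Gaussian sub-$\sigma$-algebra is the orthogonal projection (coordinatewise, via Gaussian regression). Under Assumption~V, $\spann\{v_{j,m} : j\le n\} = \spann\{h_{j,m} : j\le n\}$, and applying $\mathcal{I}^{-1}$ this equals $\mathcal{I}^{-1}(\spann\{h_{j,m} : j \le n\}) = \spann\{e'_{j,m}(X) : j\le n\}$ inside $L^2(\mu)$; therefore the $L^2$-span of $r_{1,1},\dots,r_{n,d_n}$ is exactly the Gaussian space generated by $\mathcal{A}_n$. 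For each fixed $e' \in B_{E'}$, the real Gaussian $e'(\iota X) = e'(X)$ has conditional expectation given $\mathcal{A}_n$ equal to its $L^2$-projection onto $\spann\{r_{j,m} : j\le n\}$, namely $\sum_{j=1}^n\sum_{m=1}^{d_j}\mathbb{E}(e'(X)\,r_{j,m})\,r_{j,m}$; and $\mathbb{E}(e'(X) r_{j,m}) = \langle \mathcal{I}(e'(X)), \mathcal{I}(r_{j,m})\rangle_{H_X} = \langle h_{e'}, v_{j,m}\rangle_{H_X} = v_{j,m}(e')$. Hence $e'(\iota\Xbn) = \sum_{j=1}^n\sum_{m=1}^{d_j} r_{j,m}\, v_{j,m}(e')$ for every $e'$, which gives the stated formula for $\iota\Xbn$ (using that $E''|_{B_{E'}}$ separates, or that both sides are the same $H_X$-valued object).

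For the last assertion, suppose $\Vert\textup{cov}(X)-\textup{cov}(\Xbn)\Vert_{E'\to E}\to 0$. Then, for fixed $e'\in B_{E'}$, $\mathbb{E}|e'(X) - e'(\Xbn)|^2 = \langle(\textup{cov}(X)-\textup{cov}(\Xbn))e',e'\rangle \le \Vert\textup{cov}(X)-\textup{cov}(\Xbn)\Vert_{E'\to E}\to 0$, so $e'(\Xbn)\to e'(X)$ in $L^2(\mu)$, i.e.\ $\sum_{j=1}^n\sum_{m=1}^{d_j} r_{j,m} v_{j,m}(e') \to e'(X)$ in $L^2(\mu)$. Thus the series $\sum_{j,m} r_{j,m} v_{j,m}$ converges in the Gaussian Hilbert space and, under Assumption~V together with the weak$^*$-density consequence, its sum represents $\iota X$; more directly, $\mathcal{I}^{-1}$ sends $\iota X$ (viewed through the identity $\mathcal{I}(e'(X)) = h_{e'}$, extended) to the element whose $H_X$-expansion in the ONB $(v_{j,m})$ has coefficients $r_{j,m}$, and convergence of partial sums in $L^2(\mu)$ transfers to the $E''$-valued convergence of $\iota\Xbn \to \iota X$ established in the corollaries, yielding the representation $\iota X = \sum_{j=1}^\infty\sum_{m=1}^{d_j} r_{j,m} v_{j,m}$.

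The main obstacle I anticipate is making the passage between the $L^2(\mu)$-convergence of the scalar series $\sum_{j,m} r_{j,m} v_{j,m}(e')$ (for each $e'$) and convergence of the $H_X$- or $E''$-valued series $\sum_{j,m} r_{j,m} v_{j,m}$ fully rigorous: one must check that the partial sums $S_n := \sum_{j\le n} r_{j,m} v_{j,m}$ form a Cauchy sequence in $L^2(\mu; E'')$ (or in $L^2(\mu; H_X)$), which follows from orthonormality of $(v_{j,m})$ in $H_X$ together with $\mathbb{E}\Vert S_n - S_m\Vert_{H_X}^2 = \sum_{j=m+1}^n\sum_{l=1}^{d_j} \mathbb{E} r_{j,l}^2 = \sum_{j=m+1}^n d_j$ — wait, this diverges, so one instead works with the weaker $E''$-norm and uses Lemma~\ref{lem:inequality mainMehr} to control $\Vert S_n - S_m\Vert_{E''}$, or simply defines $\iota X$ as the given fixed element and verifies the coefficient identities $r_{j,m} = \langle \iota X, \sqrt{a_j} v_{j,m}\rangle$ rescaled, avoiding any need for $L^2(\mu;E'')$-summability of the full series and instead invoking the already-proven norm convergence $\Vert \iota X - \iota\Xbn\Vert_{E''}\to 0$ that comes out of the main theorem's hypotheses.
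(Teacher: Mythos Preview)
Your construction of the $r_{j,m}$ via the canonical isometry between the Gaussian Hilbert space $\overline{\{e'(X):e'\in E'\}}\subseteq L^2(\mu)$ and $H_X$ is correct and is essentially the same idea the paper uses; the paper takes a small detour through $H_{\Xbn}$ (checking that $(v_{j,m})_{j\le n}$ is also an ONB there, then applying a Karhunen--Lo\`eve result from \cite{daniel}), whereas your direct Gaussian-regression computation of $e'(\iota\Xbn)$ as the $L^2$-projection onto $\spann\{r_{j,m}:j\le n\}$ is arguably cleaner and avoids that detour.

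The genuine gap is in the final assertion. You correctly establish $e'(\Xbn)\to e'(X)$ in $L^2(\mu)$ for each fixed $e'$, but your attempts to upgrade this to convergence of the $E''$-valued series are either too strong (the partial sums are \emph{not} Cauchy in $L^2(\mu;H_X)$, as you noticed), require hypotheses not present in this lemma (Lemma~\ref{lem:inequality mainMehr} needs $(c_j)\in\ell^1$ and related conditions, whereas here only $\Vert\textup{cov}(X)-\textup{cov}(\Xbn)\Vert\to 0$ is assumed), or are circular (the main theorem's conclusion $\Vert X-\Xbn\Vert_E\to 0$ is proved \emph{using} this lemma via Corollary~\ref{cor:Concentration}). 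The missing tool is the vector-valued martingale convergence theorem: since $\Xbn=\mathbb{E}(X\mid\mathcal{A}_n)$ with increasing $\mathcal{A}_n$, the sequence $(\Xbn)$ is a closed $E$-valued martingale and therefore converges almost surely in $\Vert\,\cdot\,\Vert_E$ by \cite[Theorem~3.3.2]{martingal}. One then identifies the limit with $X$ using the covariance assumption (the paper does this by showing $V^\perp=\{0\}$, i.e.\ that $(v_{j,m})$ is a \emph{complete} ONB of $H_X$, and then noting that $X-\Xbn$ is centered Gaussian with vanishing covariance); your scalar $L^2$-convergence $e'(\Xbn)\to e'(X)$ would also suffice for the identification once a.s.\ norm convergence is secured.
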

\begin{proof}

Our first goal is to contruct the random variables $r_{j,m}$.
To this end let $(v_{j,m})$ be an ONB of $V_n:= \textup{span} \{ \int_\Omega e_{j,m}'(X) \iota X \, \textup{d} \mu \, |  \, 1 \leq j \leq n, 1 \leq m \leq d_j \}$ and $(v_j^\perp)$ be an ONB of $V^\perp:=(\cup_{n \in \mathbb{N}} V)^\perp$. We note that since $H_X$ is separable $(v_{j,m})$ and $(v_j^\perp)$ are both countable.
Furthermore, $(v_{j,m}) \cup (v_j^\perp)$ is an ONB of $H_X$ and for all $e'\in B_{E'}$ we have
\begin{align}\label{eq:covX}
\textup{cov}(X)e'= k(e', \mycdot)=\sum_{j=1}^\infty  \sum_{m=1}^{d_j}v_{j,m}(e') v_{j,m}(\mycdot) + \sum_{j=1}^\infty v_j^\perp (e') v_j^\perp(\mycdot).
\end{align}
By \cite[Theorem 4]{daniel}, for all $e' \in B_{E'}$ we also have
\begin{align}\label{eq:covXbn}
\textup{cov}(\Xbn)e'= \sum_{j=1}^n \sum_{m=1}^{d_j} v_{j,m}(e') v_{j,m}(\mycdot).
\end{align}
By \cite[Theorem 3.1]{RolandOpfer}, we have that $(v_{j,m}) \subset H_{\Xbn}$ is a Parseval frame, that is 
\begin{align*}
\sum_{j=1}^n \sum_{m=1}^{d_j}| \langle f , v_{j,m} \rangle_{H_{\Xbn}} |^2 = \Vert f \Vert_{H_{\Xbn}}^2
\end{align*}
 for all $f \in H_{\Xbn}$. Consequently we have $H_{\Xbn}= \textup{span}\{ v_{j,m} \, | \, 1 \leq j \leq m, 1 \leq m \leq d_j\}$ by \eqref{eq:covXbn} and \cite[Theorem 4.21]{svm} and since the vectors $(v_{j,m})$ are linearly independent, they are a basis of $H_{\Xbn}$.
Moreover, by \cite[Proposition 1.11]{finiteFrames}, we have, for all $i,l \in \mathbb{N}$,
\begin{align*}
v_{i,l} = \sum_{j=1}^n \sum_{m=1}^{d_j} \langle v_{i,l} , v_{j,m} \rangle_{H_{\Xbn}} v_{j,m}.
\end{align*}
Since $(v_{j,m})$ is a basis, the coefficients are unique. We conclude $\langle v_{i,l} , v_{j,m} \rangle_{H_{\Xbn}} = 1$ if $(j,m)=(i,l)$ and $0$ else. In summary, $(v_{j,m})$ is an ONB of $H_{\Xbn}$.

We now define $G_{\Xbn}:=\overline{\{ e'(\Xbn) \, | \, e' \in E'\}}^{\Vert \mycdot \Vert_{L^2(\mu)} }$ and 
$W_{\Xbn}:= \{ \int_\Omega g \Xbn \, \textup{d} \mu \, | \, g \in G_{\Xbn}\}$ with 
$\Vert f \Vert_{W_{\Xbn}} := \inf_{g \in G_{\Xbn}} \Vert g \Vert_{L^2(\mu)}$. 
By Lemma \cite[Lemma 15]{daniel} the mapping $V_{\Xbn}:(G_{\Xbn}, \Vert \mycdot \Vert_{L^2(\mu)}) \to (W_{\Xbn}, \Vert \mycdot \Vert_{W_{\Xbn}})$ defined by 
\begin{align*}
V_{\Xbn} g := \int_\Omega g \Xbn \, \textup{d} \mu
\end{align*}
is an isometry. Furthermore, the mapping $\iota : W_{\Xbn} \to H_{\Xbn}$ is an isometry by \cite[Section 2.3]{daniel}. Thus we define $ r_{j,m}:= V_{\Xbn}^{-1} \iota^{-1}  v_{j,m}$. Furthermore, we have $r_{j,m} \in G_{\Xbn}$ and by definition they are normalized and orthogonal, thus $r_{j,m} \sim \mathcal{N}(0,1)$ i.i.d.. 
By \cite[Theorem 3]{daniel}, we have 
\begin{align*}
\Xbn = \sum_{j=1}^n \sum_{m=1}^{d_j} r_{j,m} v_{j,m} .
\end{align*} 

Let us now assume that $\textup{cov}(\Xbn) \to \textup{cov}(X)$ in the operator norm. Our first goal is to show
\begin{align}\label{eq:Dichtheit}
\overline{\textup{span} \left\{\int_\Omega e_{j,m}'(X) \iota X \, \textup{d} \mu \right\}}^{\Vert \cdot \Vert_{H_X}} = H_X.
\end{align}
It suffices to show $V^{\perp} = \{ 0 \}$. 
By assumption, $\textup{cov}(X_n)e' \to \textup{cov}(X)e'$, so \eqref{eq:covX} and \eqref{eq:covXbn} give
\begin{align*}
\textup{cov}(X)e' - \textup{cov}(\Xbn)e' = \sum_{j=1}^\infty v_j^\perp (e') v_j^\perp(\mycdot)  + \sum_{j=n+1}^\infty \sum_{m=1}^{d_j} v_{j,m}(e') v_{j,m}(\mycdot) \to 0.
\end{align*}
This implies $\sum_{j=1}^\infty v_j^\perp (e') v_j^\perp(e') + \sum_{j=n+1}^\infty \sum_{m=1}^{d_j}  v_{j,m}(e') v_{j,m}(e') \to 0$ and since the first sum is independent of 
$n$, we conclude that $\sum_{j=1}^\infty v_j^\perp (e') v_j^\perp(e')=0$ for all $e' \in E'$. 
This implies $v_j^\perp=0$ for all $j \in \mathbb{N}$, so $V^\perp=\{0\}$. Therefore \eqref{eq:Dichtheit} holds true. 

By \cite[Theorem 3.3.2]{martingal} we have
\begin{align*}
\sum_{j=1}^n \sum_{m=1}^{d_j} r_{j,m} v_{j,m} \to \sum_{j=1}^\infty \sum_{m=1}^{d_j} r_{j,m} v_{j,m},
\end{align*}
where the convergence is almost surely.
Finally, $X - \Xbn$ is Gaussian with mean zero and covariance $\textup{cov}(X-\Xbn)$. By \cite[Lemma 30]{daniel} we have
\begin{align*}
\textup{cov}(X-\Xbn) = \textup{cov}(X)-\textup{cov}(\Xbn),
\end{align*} 
which converges to $0$ by assumption, thus the limit of $\Xbn$ is truly $X$ and we conclude 
\begin{align*}
\iota X = \sum_{j=1}^\infty \sum_{m=1}^{d_j} r_{j,m} v_{j,m}.
\end{align*}
\end{proof}
\subsection*{Concentration Inequality}
\begin{lemma}\label{lem:Concentration}
Let $(r_j) \sim \mathcal{N}(0,1)$ be i.i.d. and let $(b_j) \in \ell^1(\mathbb{N})$ with $b_j \geq 0$ for all $j \geq 1$. Then the series $\sum_{j=1}^\infty b_j r_j^2$ converges almost surely, and for $Z:=\sum_{j=1}^\infty b_j(r_j^2-1)$ we have, for all $\tau>0$ 
 \begin{align*}
\mu(Z \geq 2 \Vert (b_j) \Vert_{\ell^2} \sqrt{\tau}+2 \Vert (b_j)\Vert_{\ell^\infty}  \tau )\leq \mathrm{e}^{-\tau}.
\end{align*}
\end{lemma}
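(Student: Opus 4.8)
The plan is to run a Chernoff (exponential-moment) argument on the partial sums $Z_n:=\sum_{j=1}^n b_j(r_j^2-1)$, the classical route to Laurent--Massart-type tail bounds for weighted sums of $\chi^2$-random variables; if $(b_j)\equiv 0$ then $Z=0$, so we assume henceforth $(b_j)$ is not identically zero. First, for the almost sure convergence: since $b_j\ge 0$, Tonelli's theorem gives $\mathbb{E}\bigl[\sum_{j\ge 1} b_j r_j^2\bigr]=\sum_{j\ge 1} b_j\,\mathbb{E}[r_j^2]=\Vert(b_j)\Vert_{\ell^1}<\infty$, so the nonnegative series $\sum_{j\ge 1} b_j r_j^2$ is finite $\mu$-almost surely; subtracting the convergent deterministic series $\sum_{j\ge 1} b_j$ shows $Z_n\to Z:=\sum_{j\ge 1} b_j(r_j^2-1)$ $\mu$-almost surely.

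Next I would bound the moment generating function. For $r\sim\mathcal{N}(0,1)$ and $\lambda<1/2$ one has $\mathbb{E}[\mathrm{e}^{\lambda(r^2-1)}]=\mathrm{e}^{-\lambda}(1-2\lambda)^{-1/2}$, hence $\log\mathbb{E}[\mathrm{e}^{\lambda(r^2-1)}]=-\lambda-\tfrac12\log(1-2\lambda)$, and a term-by-term power-series comparison gives the elementary inequality $-u-\tfrac12\log(1-2u)\le u^2/(1-2u)$ for $0\le u<1/2$. Applying it with $u=\lambda b_j$ and using independence of the $r_j$, for every $\lambda$ with $0\le\lambda<\bigl(2\Vert(b_j)\Vert_{\ell^\infty}\bigr)^{-1}$ we obtain
\[
\log\mathbb{E}\bigl[\mathrm{e}^{\lambda Z_n}\bigr]=\sum_{j=1}^n\Bigl(-\lambda b_j-\tfrac12\log(1-2\lambda b_j)\Bigr)\le\sum_{j=1}^n\frac{\lambda^2 b_j^2}{1-2\lambda b_j}\le\frac{\lambda^2\Vert(b_j)\Vert_{\ell^2}^2}{1-2\lambda\Vert(b_j)\Vert_{\ell^\infty}}.
\]
Since $\mathrm{e}^{\lambda Z_n}\to\mathrm{e}^{\lambda Z}$ almost surely, Fatou's lemma upgrades this to the same bound for $\log\mathbb{E}[\mathrm{e}^{\lambda Z}]$; in other words $Z$ is sub-gamma with variance factor $2\Vert(b_j)\Vert_{\ell^2}^2$ and scale parameter $2\Vert(b_j)\Vert_{\ell^\infty}$.

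Finally I would apply the Chernoff bound and optimize. With $z:=2\Vert(b_j)\Vert_{\ell^2}\sqrt\tau+2\Vert(b_j)\Vert_{\ell^\infty}\tau$ and the choice $\lambda:=\sqrt\tau\bigl(\Vert(b_j)\Vert_{\ell^2}+2\Vert(b_j)\Vert_{\ell^\infty}\sqrt\tau\bigr)^{-1}$, which lies in $\bigl[0,(2\Vert(b_j)\Vert_{\ell^\infty})^{-1}\bigr)$, a short computation gives $\lambda z-\lambda^2\Vert(b_j)\Vert_{\ell^2}^2\bigl(1-2\lambda\Vert(b_j)\Vert_{\ell^\infty}\bigr)^{-1}=\tau$ — this is precisely the standard calculation showing that a sub-gamma variable with variance factor $\nu$ and scale $c$ satisfies $\mu(Z\ge\sqrt{2\nu\tau}+c\tau)\le\mathrm{e}^{-\tau}$. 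Markov's inequality applied to $\mathrm{e}^{\lambda Z}$ then yields
\[
\mu(Z\ge z)\le\mathrm{e}^{-\lambda z}\,\mathbb{E}[\mathrm{e}^{\lambda Z}]\le\exp\Bigl(-\lambda z+\tfrac{\lambda^2\Vert(b_j)\Vert_{\ell^2}^2}{1-2\lambda\Vert(b_j)\Vert_{\ell^\infty}}\Bigr)=\mathrm{e}^{-\tau},
\]
which is the asserted inequality.

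I do not anticipate a real obstacle: the argument is essentially bookkeeping, and the only points requiring a little care are the use of Tonelli and Fatou to pass to the infinite sum, and checking that the optimizing $\lambda$ remains strictly inside the admissible interval $\bigl[0,(2\Vert(b_j)\Vert_{\ell^\infty})^{-1}\bigr)$, which it does for every $\tau>0$.
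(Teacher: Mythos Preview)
Your proof is correct and follows essentially the same Chernoff/Laurent--Massart route as the paper: Tonelli for almost-sure convergence, the bound $-u-\tfrac12\log(1-2u)\le u^2/(1-2u)$ on the log-MGF of a centered $\chi^2_1$, and then optimization in $\lambda$ (the paper packages this last step as its Lemma~\ref{lem:concentration inequality}, while you carry it out explicitly). Your use of Fatou to pass from $Z_n$ to $Z$ is in fact more careful than the paper's one-line justification of the infinite-product MGF.
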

\begin{proof}
First, we show that $\sum_{j=1}^\infty b_j r_j^2$ converges almost surely. Since $b_j r_j^2 \ge 0$, we can apply Fubini's theorem to calculate the expectation
\begin{align*}
\mathbb{E} \sum_{j=1}^\infty b_j r_j^2  = \sum_{j=1}^\infty b_j \mathbb{E}r_j^2 = \sum_{j=1}^\infty b_j = \Vert b_j \Vert_{\ell^1}< \infty.
\end{align*}
A series of nonnegative random variables with finite expectation converges almost surely, so the series converges.
For the concentration inequality, we follow the argument from \cite[Lemma 1]{Massart1}. For $x < 1/2$, \cite[Theorem 4.2.3]{chisquared} gives
\begin{align*}
\psi(x):= \ln\left( \mathbb{E}  \mathrm{e}^{x(r_j^2-1)}  \right) = \ln\left( \mathrm{e}^{-x} \mathbb{E} \mathrm{e}^{r_j^2}  \right) = -x - \frac{1}{2} \ln(1-2x).
\end{align*}
Using Lemma \ref{lem:einfacheUngleichung}, for $0 < x < 1/2$ we have
\begin{align*}
0<\psi(x) \leq \frac{x^2}{1-2x}.
\end{align*}
 Thus, for $0<x<1/ (2 \Vert (b_j) \Vert_{\ell^\infty})$, the independence of $(r_j)$ yields
\begin{align*}
\ln\left(\mathbb{E}  \mathrm{e}^{xZ}  \right) = \sum_{j=1}^\infty \ln \left( \mathbb{E} \mathrm{e}^{b_j x(r_j^2-1)} \right) \leq  \sum_{j=1}^\infty \frac{b_j^2x^2}{1-2b_jx} \leq \frac{x^2\Vert (b_j) \Vert_{\ell^2}^2 }{1-2  x\Vert (b_j)\Vert_{\ell^\infty} } < \infty.
\end{align*} 
All series converge since the terms are nonnegative. 
Finally, applying Lemma \ref{lem:concentration inequality} with $v = 2 \Vert (b_j) \Vert_{\ell^2}^2$ and $c = 2 \Vert (b_j)\Vert_{\ell^\infty}$ gives the stated bound.
\end{proof}

\begin{corollary}\label{cor:Concentration}
Under \hyperref[ass:V]{Assumption V}, let $(a_j)$ be a sequence with $a_j>0$ for all $j \in \mathbb{N}$ and $(d_j/a_j) \in \ell^1(\mathbb{N})$. Then have $\iota X,\iota \Xbn \in H_{a,V}$ almost surely, and 
\begin{align*}
\mu \left(\Vert \iota X - \iota \Xbn \Vert_{H_{a,V,d}}^2 \leq  5 \max \{1,\tau\} \sum_{j=n+1}^\infty \frac{d_j}{a_j} \right)  \geq 1-\mathrm{e}^{-\tau}.
\end{align*} 
\end{corollary}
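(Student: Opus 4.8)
The plan is to reduce the statement to Lemma~\ref{lem:Concentration} after rewriting $\Vert \iota X - \iota \Xbn \Vert_{H_{a,V,d}}^2$ as a weighted sum of squares of i.i.d.\ standard Gaussians. First I would invoke Lemma~\ref{lem:ONB}: under \hyperref[ass:V]{Assumption V} there are independent $\mathcal{N}(0,1)$ random variables $r_{j,m}$ with $\iota \Xbn = \sum_{j=1}^{n}\sum_{m=1}^{d_j} r_{j,m} v_{j,m}$, and, using the operator-norm convergence $\textup{cov}(\Xbn)\to\textup{cov}(X)$, also $\iota X = \sum_{j=1}^{\infty}\sum_{m=1}^{d_j} r_{j,m} v_{j,m}$; hence $\iota X - \iota \Xbn = \sum_{j=n+1}^{\infty}\sum_{m=1}^{d_j} r_{j,m} v_{j,m}$. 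Since $(\sqrt{a_j}\, v_{j,m})$ is an ONB of $H_{a,V,d}$, writing $v_{j,m} = a_j^{-1/2}(\sqrt{a_j}\,v_{j,m})$ and applying Parseval's identity in $H_{a,V,d}$ gives $\Vert \iota X - \iota \Xbn \Vert_{H_{a,V,d}}^2 = \sum_{j=n+1}^{\infty} a_j^{-1}\sum_{m=1}^{d_j} r_{j,m}^2$.

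Next I would flatten this double sum: enumerate the index set $\{(j,m) : j \ge n+1,\ 1\le m\le d_j\}$ as a single sequence, let $(\rho_k)$ be the corresponding (still i.i.d.) $\mathcal{N}(0,1)$ variables and $(b_k)$ the associated weights, where the value $a_j^{-1}$ is repeated $d_j$ times, so that $\Vert \iota X - \iota \Xbn \Vert_{H_{a,V,d}}^2 = \sum_k b_k \rho_k^2$ and $\Vert (b_k)\Vert_{\ell^1} = \sum_{j=n+1}^{\infty} d_j/a_j =: S_n < \infty$ by hypothesis. Because the $b_k$ are nonnegative, $\Vert (b_k)\Vert_{\ell^\infty} \le \Vert (b_k)\Vert_{\ell^2} \le \Vert (b_k)\Vert_{\ell^1} = S_n$. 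Lemma~\ref{lem:Concentration} then applies verbatim: the series $\sum_k b_k \rho_k^2$ converges almost surely, which together with the fact that $\iota \Xbn$ is a finite sum yields $\iota X, \iota \Xbn \in H_{a,V,d}$ almost surely; and for $Z := \sum_k b_k(\rho_k^2 - 1)$ we get $\mu(Z \ge 2\Vert (b_k)\Vert_{\ell^2}\sqrt{\tau} + 2\Vert (b_k)\Vert_{\ell^\infty}\tau) \le \mathrm{e}^{-\tau}$.

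Finally I would convert this into the claimed bound: on the complement of the above event, $\sum_k b_k \rho_k^2 = Z + S_n \le S_n + 2\Vert (b_k)\Vert_{\ell^2}\sqrt{\tau} + 2\Vert (b_k)\Vert_{\ell^\infty}\tau \le S_n(1 + 2\sqrt{\tau} + 2\tau)$, and the elementary inequality $1 + 2\sqrt{\tau} + 2\tau \le 5\max\{1,\tau\}$ (bound $\sqrt{\tau}$ and $\tau$ by $1$ when $\tau \le 1$, and bound $1$ and $\sqrt{\tau}$ by $\tau$ when $\tau \ge 1$) gives $\Vert \iota X - \iota \Xbn \Vert_{H_{a,V,d}}^2 \le 5\max\{1,\tau\}\sum_{j=n+1}^{\infty} d_j/a_j$ with probability at least $1 - \mathrm{e}^{-\tau}$. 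I expect the argument to be essentially bookkeeping once Lemmas~\ref{lem:ONB} and~\ref{lem:Concentration} are in hand; the only genuinely delicate point is the step from Lemma~\ref{lem:ONB} guaranteeing that the almost surely convergent partial sums really have limit $\iota X$ in $H_{a,V,d}$ (so that the tail identity for the norm is valid), which rests on $\textup{cov}(\Xbn) \to \textup{cov}(X)$ in operator norm — everything else (the Parseval identity, the reindexing, and the scalar inequality) is routine.
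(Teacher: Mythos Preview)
Your proposal is correct and follows essentially the same route as the paper: expand $\iota X-\iota\Xbn$ via Lemma~\ref{lem:ONB}, use that $(\sqrt{a_j}\,v_{j,m})$ is an ONB of $H_{a,V,d}$ to express the squared norm as a weighted $\chi^2$-sum, apply Lemma~\ref{lem:Concentration}, and then collapse the $\ell^2$- and $\ell^\infty$-terms into the $\ell^1$-norm using $1+2\sqrt{\tau}+2\tau\le 5\max\{1,\tau\}$. Your flagging of the reliance on the operator-norm convergence $\textup{cov}(\Xbn)\to\textup{cov}(X)$ (needed for the full expansion of $\iota X$ in Lemma~\ref{lem:ONB}) is apt; the paper uses it implicitly here and makes it explicit only in the hypothesis of Theorem~\ref{Theorem:MehrMain Result}.
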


\begin{proof}
By Lemma \ref{lem:ONB}, we can write $\iota X=\sum_{j=1}^\infty \sum_{m=1}^{d_j} r_{j,m} v_{j,m}$. Applying Lemma \ref{lem:Concentration}, we get
\begin{align*}
\sum_{j=1}^\infty \frac{1}{a_j} \sum_{m=1}^{d_j} r_{j,m}^2 < \infty
\end{align*}
almost surely. Since $(\sqrt{a_j}v_{j,m})$ is an ONB of $H_{a,V,d}$ we conclude $\iota X \in H_{a,V,d}$. Moreover, by Lemma \ref{lem:ONB}, we have $\iota X-\iota \Xbn = \sum_{j=n+1}^\infty \sum_{m=1}^{d_j} r_{j,m} v_{j,m}$. Applying the concentration inequality from Lemma \ref{lem:Concentration}, we obtain for any $\tau>0$
\begin{align*}
\mu\left( \Vert \iota X- \iota X_n \Vert_{H_{a,V}}^2 \geq \sum_{j=n+1}^\infty d_j/a_j +2 \sqrt{\tau} \sqrt{\sum_{j=n+1}^\infty d_j^2/a_j^2} + 2 \tau \sup_{j \geq n+1} d_j/a_j \right) \leq \mathrm{e}^{-\tau}.
\end{align*} 
Furthermore, note that we have $\Vert \left( d_{j+n}/a_{j+n} \right) \Vert_{\ell^\infty} \leq \Vert \left( d_{j+n}/a_{j+n} \right) \Vert_{\ell^2} \leq \Vert \left( d_{j+n}/a_{j+n} \right) \Vert_{\ell^1}$ and thus we find 
\begin{align*}
 &\left\Vert \left( d_{j+n}/a_{j+n} \right) \right\Vert_{\ell^1(\mathbb{N})} + 2 \sqrt{\tau} \left\Vert \left( d_{j+n}/a_{j+n} \right) \right\Vert_{\ell^2(\mathbb{N})} +2 \tau \left\Vert \left( d_{j+n}/a_{j+n} \right) \right\Vert_{\ell^\infty(\mathbb{N})} \\
  \leq &5 \max\{1,\tau\} \left\Vert \left( d_{j+n}/a_{j+n} \right)\right\Vert_{\ell^1(\mathbb{N})}.
\end{align*} 
Putting everything together leads to the assertion.
\end{proof}

\subsection*{Proof of Main Result and Corollaries}

\begin{proof}[Proof of Theorem \ref{Theorem:MehrMain Result}]
Let $\left( (v_{j,m})_{m=1}^{d_j} \right)_{j=1}^n \subseteq H_{\Xbn}$ be an ONB of $H_{\Xbn}$. We then have
\begin{align*}
 \sup_{e' \in B_{E'} } \sum_{j=n+1}^\infty \sum_{m=1}^{d_j} v_{j,m}^2(e') = \Vert \textup{cov}(X)-\textup{cov}(\Xbn) \Vert_{E'\to E}  \leq c_n.
\end{align*} 
Moreover, we have $\Vert \iota X - \iota \Xbn \Vert_{E''} = \Vert X - \Xbn \Vert_E$.
By Lemma \ref{lem:ONB} we have $\Xbn=\Pi_n X$. Applying Lemma \ref{lem:inequality mainMehr}, we obtain
\begin{align*}
\Vert X - \Xbn \Vert_E = \Vert \iota X - \iota \Xbn \Vert_{E''} \leq \sqrt{\sum_{j=n+1}^\infty a_j (c_{j-1}-c_j) \cdot \Vert \iota X - \iota \Xbn \Vert_{H_{a,V,d}}^2}.
\end{align*}
Finally, using Corollary \ref{cor:Concentration} leads to the assertion.
\end{proof}
\begin{proof}[Proof of Corollary \ref{Corollary:PolynomHigher}]
We will apply Theorem \ref{Theorem:MehrMain Result} with $a_j:=j^{\gamma}$ for $\beta+1 < \gamma < \alpha$. 
The condition $\beta+1< \gamma$ implies $(d_j/a_j) \in \ell^1(\mathbb{N})$. We note $(a_j)$ is monotone increasing. Furthermore, $\gamma < \alpha$ implies $a_j c_j \to 0$.

Next, we show that $\left(C j^{-\gamma}(j^{-\alpha} - (j+1)^{-\alpha}) \right) \in \ell^1(\mathbb{N})$. 
To this end we define $f:(0,\infty) \to \mathbb{R}$ by $f(t):=C(t+1)^{-\alpha}$. By the mean value theorem there exist $\xi_j \in [j-1,j]$ such that $c_{j-1}-c_j=-f'(\xi_j)=C \alpha(\xi_j+1)^{-\alpha-1} \leq C \alpha j^{-\alpha-1}$. Thus $ \left( a_j (c_{j-1}-c_j) \right) \in \ell^1(\mathbb{N})$ is equivalent to $\gamma-\alpha-1 <-1$ meaning $\gamma< \alpha$. In other words the assumption of Theorem \ref{Theorem:MehrMain Result} are satisfied.

Now we estimate 
\begin{align*}
\sum_{j=n+1}^\infty a_j (c_{j-1}-c_j) = \sum_{j=n+1}^\infty -a_j f'(\xi_j) = \sum_{j=n+1}^\infty C j^\gamma \alpha (\xi_j+1)^{-\alpha-1} \leq C\alpha \sum_{j=n+1}^\infty j^{\gamma-\alpha-1}.
\end{align*}
Using that $j^{\gamma-\alpha-1}$ is decreasing, we bound the sum by an integral
\begin{align*}
\sum_{j=n+1}^\infty j^{\gamma-\alpha-1}   \leq \int_n^\infty t^{\gamma-\alpha-1} \textup{d} t = \left[ \frac{t^{\gamma-\alpha}}{\gamma-\alpha} \right]_{t=n}^\infty = \frac{n^{\gamma-\alpha}}{\alpha-\gamma}.
\end{align*} 
Similarly, for $(d_j / a_j)$, we have
\begin{align*}
\sum_{j=n+1}^\infty \frac{d_j}{a_j}
\leq  C_d 2^\beta \sum_{j=n+1}^\infty j^{\beta-\gamma}
\leq  C_d 2^\beta \int_n^\infty t^{\beta-\gamma} \textup{d} t 
=  C_d 2^\beta \left[ \frac{t^{\beta+1-\gamma}}{\beta+1-\gamma} \right]_{t=n}^\infty
=  C_d 2^\beta \frac{n^{\beta+1-\gamma}}{\gamma-\beta-1}.
\end{align*} 
Multiplying the two terms gives
\begin{align*}
\left( C \sum_{j=n+1}^\infty j^{\gamma-\alpha-1} \right) \left( \sum_{j=n+1}^\infty \frac{d_j}{a_j} \right) \leq  C C_d 2^\beta \frac{n^{\gamma-\alpha}}{\alpha-\gamma} \frac{n^{\beta+1-\gamma}}{\gamma-\beta-1} =  C C_d 2^\beta \frac{n^{-\alpha+\beta+1}}{(\alpha-\gamma)(\gamma-\beta-1)}.
\end{align*}
By setting $\gamma:=\frac{\alpha+\beta+1}{2}$ and putting everything together we obtain by Theorem \ref{Theorem:MehrMain Result}
\begin{align*}
\mu \left(\Vert X-X^{(N)} \Vert_{E' \to E} \leq \frac{\sqrt{20  C C_d 2^\beta \max\{1,\tau \}}}{\alpha-\beta-1} n^{\frac{-\alpha+\beta+1}{2}} \right)> 1 - \mathrm{e}^{-\tau}.
\end{align*}
\end{proof}

\begin{proof}[Proof of Theorem \ref{Theorem:simple Main Result}]
The result follows directly from Theorem \ref{Theorem:MehrMain Result} by setting $d_j = 1$ and $a_j:=1/\sqrt{c_{j-1}-c_j}$ for all $j \in \mathbb{N}$.
\end{proof}

\begin{proof}[Proof of Corollary \ref{Corollary:Polynom}]
This follows directly by Corollary \ref{Corollary:PolynomHigher} with $d_j=1$.
\end{proof}

\begin{proof}[Proof of Corollary \ref{Corollary:exponent}]
We will apply Theorem \ref{Theorem:simple Main Result} with $c_n:=C_1 \mathrm{e}^{-C_2 n^{1/\alpha}}$. First, we note that $(c_n) \in \ell^1(\mathbb{N})$ and $(c_n)$ is positive monotone decreasing.  
To show that $(c_{j-1}-c_j)$ is monotone decreasing, observe that this is equivalent to $c_{j-1}-c_j - (c_j - c_{j+1}) \geq 0$. Applying the mean value theorem twice to the function $f: [0,\infty) \to \mathbb{R}$ $f(t):= C_1 \mathrm{e}^{-C_2 t^{1/\alpha}}$, there exist $\xi \in [j-1,j]$,$\zeta \in[j,j+1]$ such that 
\begin{align*}
c_{j-1}-c_j-(c_j -c_{j+1}) = -f'(\xi)+f'(\zeta).
\end{align*}
Since $f$ is convex and $\zeta > \xi$, we conclude $f'(\zeta) - f'(\xi) \geq 0$. 

By the mean value theorem, there also exist $\xi_j \in [j-1,j]$ such that
\begin{align*}
\sum_{j=1}^\infty \left(\sqrt{c_{j-1}-c_j} \right) 
= \sum_{j=1}^\infty \sqrt{-f'(\xi_j)}
 = \sum_{j=1}^\infty \sqrt{\frac{C_1 C_2}{\alpha}} \xi_j^{\frac{1}{2\alpha}-\frac12} \mathrm{e}^{-\frac{C_2}{2} \xi_j^{1/\alpha}} 
\leq \sum_{j=1}^\infty \sqrt{\frac{C_1 C_2}{\alpha}} j^{\frac{1}{2\alpha}-\frac12} \mathrm{e}^{-\frac{C_2}{2} (j-1)^{1/\alpha}}.
\end{align*} 
The final sum is finite because of the exponential decay, hence $(\sqrt{c_{j-1}-c_j}) \in \ell^1(\mathbb{N})$.

Finally, we show that $c_j/\sqrt{c_{j-1}-c_j} \to 0$. Again, using the mean value theorem, for $j > 1$:
\begin{align*}
\frac{c_j}{\sqrt{c_{j-1}-c_j}}
= \frac{C_1 \mathrm{e}^{-C_2 j^{1/\alpha}}}{\sqrt{\frac{C_1 C_2}{\alpha}} \xi_j^{\frac{1}{2\alpha}-\frac12} \mathrm{e}^{-\frac{C_2}{2} \xi_j^{1/\alpha}} }
&\leq \frac{\sqrt{\alpha C_1 }}{\sqrt{C_2} (j-1)^{\frac{1}{2\alpha}-\frac12}} \cdot \frac{ \mathrm{e}^{-C_2 j^{1/\alpha}}}{  \mathrm{e}^{-\frac{C_2}{2} \xi_j^{1/\alpha}} } \\
&= \frac{\sqrt{\alpha C_1 }}{\sqrt{C_2} (j-1)^{\frac{1}{2\alpha}-\frac12}} \cdot \mathrm{e}^{-\frac{C_2}{2} j^{1/\alpha} } \to 0.
\end{align*} 
Thus all conditions of Theorem \ref{Theorem:simple Main Result} are satisfied. For $\alpha>1$ applying Corollary \ref{cor:gaussian} gives the desired assertion.

For the case $\alpha=1$ we calculate 
\begin{align*}
\sum_{j=n+1}^\infty \sqrt{c_{j-1}-c_j} 
= \sum_{j=n+1}^\infty \sqrt{C_1 \cdot (\mathrm{e}^{C_2})^{-(j-1)}(1-\mathrm{e}^{-C_2})} 
&= \sqrt{C_1 \cdot (1 - \mathrm{e}^{-C_2}) \mathrm{e}^{C_2}} \sum_{j=n+1}^\infty (\sqrt{\mathrm{e}^{C_2}})^{-j} \\
&= \sqrt{C_1 \cdot (\mathrm{e}^{C_2} - 1) } (\mathrm{e}^{C_2})^{-\frac{n+1}{2}} \sum_{j=0}^\infty (\sqrt{\mathrm{e}^{C_2}})^{-j} \\
&= \sqrt{C_1 \cdot (\mathrm{e}^{C_2} - 1) } \cdot \frac{\mathrm{e}^{C_2}}{\sqrt{\mathrm{e}^{C_2}}-1} \mathrm{e}^{-\frac{C_2}{2} n}. 
\end{align*}
With Theorem \ref{Theorem:simple Main Result} we obtain the assertion.
\end{proof}
\begin{lemma}\label{lem:stetigeNorm}
If $E=C(T)$ and $X$ is an $E$-valued Gaussian random variable, we have 
\begin{align*}
\sup_{t \in T} | k_X(t,t) - k_{\Xbn}(t,t) | = \Vert \textup{cov}(X) - \textup{cov}(\Xbn) \Vert_{C(T)' \to C(T)}.
\end{align*} 
\end{lemma}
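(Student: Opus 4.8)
The plan is to deduce the statement from a general fact: for a \emph{positive} bounded operator $A\colon C(T)'\to C(T)$, i.e.\ one with $\langle A e',e'\rangle_{C(T),C(T)'}\ge 0$ for all $e'\in C(T)'$, one has $\|A\|_{C(T)'\to C(T)}=\sup_{t\in T}\langle A\delta_t,\delta_t\rangle$, where $\delta_t\in C(T)'$ denotes point evaluation. First I would set $A:=\textup{cov}(X)-\textup{cov}(\Xbn)$ and record two elementary facts. Inserting $e'=\delta_t$ into the definition of the covariance operator and then applying $\delta_s$ gives $\langle\textup{cov}(X)\delta_t,\delta_s\rangle_{C(T),C(T)'}=k_X(t,s)$, and likewise for $\Xbn$; hence $\langle A\delta_t,\delta_t\rangle=k_X(t,t)-k_{\Xbn}(t,t)$ for every $t\in T$. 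Moreover $A$ is positive: with $U:=e'(X)$ and $V:=e'(\Xbn)$, the (Bochner) conditional expectation commutes with the bounded functional $e'$, so $V=\mathbb{E}(U\mid\mathcal{A}_0)$, and orthogonality of $U-V$ and $V$ in $L^2(\mu)$ yields $\langle Ae',e'\rangle=\var(U)-\var(V)=\var(U-V)\ge 0$; equivalently, $A=\textup{cov}(X-\Xbn)$ is a covariance operator by \cite[Lemma 30]{daniel}. In particular $k_X(t,t)-k_{\Xbn}(t,t)=\langle A\delta_t,\delta_t\rangle\ge 0$, so the modulus in the claim is superfluous, and it remains to establish the general fact for this $A$.

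For the lower bound, $\|\delta_t\|_{C(T)'}=1$ and $\|g\|_{C(T)}=\sup_{s\in T}|\delta_s(g)|$ give
\[
\|A\|_{C(T)'\to C(T)}\ \ge\ \|A\delta_t\|_{C(T)}\ =\ \sup_{s\in T}|\langle A\delta_t,\delta_s\rangle|\ \ge\ \langle A\delta_t,\delta_t\rangle,
\]
and taking the supremum over $t$ yields ``$\ge$''. For the upper bound I would use that the bilinear form $(e',f')\mapsto\langle Ae',f'\rangle$ is symmetric (covariance operators are symmetric) and positive semidefinite, hence satisfies Cauchy--Schwarz: $|\langle Ae',f'\rangle|^2\le\langle Ae',e'\rangle\,\langle Af',f'\rangle$. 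Taking $f'=\delta_t$ and the supremum over $t$,
\begin{align*}
\|Ae'\|_{C(T)}^2=\sup_{t\in T}|\langle Ae',\delta_t\rangle|^2 &\le\langle Ae',e'\rangle\sup_{t\in T}\langle A\delta_t,\delta_t\rangle\\
&\le\|Ae'\|_{C(T)}\,\|e'\|_{C(T)'}\sup_{t\in T}\langle A\delta_t,\delta_t\rangle,
\end{align*}
where the last inequality is the duality bound $\langle Ae',e'\rangle\le\|Ae'\|_{C(T)}\|e'\|_{C(T)'}$. Dividing by $\|Ae'\|_{C(T)}$ (the inequality being trivial if this vanishes) and taking the supremum over the unit ball of $C(T)'$ gives ``$\le$''. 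Combining the two bounds proves the general fact, and together with the identity $\langle A\delta_t,\delta_t\rangle=k_X(t,t)-k_{\Xbn}(t,t)$ this is exactly the lemma.

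The only genuinely delicate step is the upper bound: one has to recognise that positivity of $A$ supplies a Cauchy--Schwarz inequality for the dual pairing, and then bootstrap the resulting estimate by absorbing one factor $\|Ae'\|_{C(T)}$ via the duality bound $\langle Ae',e'\rangle\le\|Ae'\|_{C(T)}\|e'\|_{C(T)'}$. The identification of $\langle A\delta_t,\delta_t\rangle$ with $k_X(t,t)-k_{\Xbn}(t,t)$, the positivity (and symmetry) of $A$, and the lower bound are all routine.
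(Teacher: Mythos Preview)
Your proof is correct, but it follows a genuinely different route from the paper's. The paper sets $Z:=X-\Xbn$, uses the kernel Cauchy--Schwarz inequality $|k_Z(t,s)|^2\le k_Z(t,t)k_Z(s,s)$ to reduce $\sup_{t}|k_Z(t,t)|$ to $\sup_{t,s}|k_Z(t,s)|$, and then invokes the Krein--Milman theorem together with $w^*$-continuity of the kernel (cited from \cite[Theorem B.3]{IngoConditioning}) to pass from the extreme points $\pm\delta_t$ of $B_{C(T)'}$ to the full unit ball. Your argument instead exploits positivity of $A=\textup{cov}(X)-\textup{cov}(\Xbn)$ directly: the Cauchy--Schwarz inequality for the semidefinite form $(e',f')\mapsto\langle Ae',f'\rangle$ applied with $f'=\delta_t$, followed by the bootstrap $\langle Ae',e'\rangle\le\|Ae'\|\,\|e'\|$, yields the upper bound without any appeal to extreme-point structure or $w^*$-continuity. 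This makes your proof more elementary and self-contained; it also makes transparent that the identity holds for \emph{any} symmetric positive operator $A\colon C(T)'\to C(T)$, not just differences of covariance operators. The paper's approach, by contrast, is more structural---it identifies the underlying reason as the fact that point evaluations generate $B_{C(T)'}$ in the $w^*$-closed absolutely convex hull---but pays for this with two external references.
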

\begin{proof}
We set $Z=X-\Xbn$ and show that
\begin{align*}
\sup_{t \in T} | k_Z(t,t) | = \Vert \textup{cov}(Z) \Vert_{C(T)' \to C(T) }
\end{align*}
holds true for all $E$-valued Gaussian random variables $Z$. First, recall that 
\begin{align*}
\sup_{t \in T} |k_Z(t,t)| = \sup_{t,s \in T} | k_Z(t,s)| = \sup_{t,s \in T} \langle \textup{cov}(Z) \delta_t,\delta_s \rangle_{C(T),C(T)'},
\end{align*}
where $\delta_t \in C(T)'$ denotes the point evaluation at $t$. Furthermore, by Krein-Milman theorem we obtain $\overline{\textup{aco}\{ \delta_t \, | \, t \in T\}}^{w^*} = B_{C(T)'}$, see for instance \cite[Theorem 3.23]{Rudin}. Moreover, by \cite[Theorem B.3]{IngoConditioning} the kernel $k_Z$ is $w^*$-continuous and thus we find 
\begin{align*}
\sup_{t,s \in T} \langle \textup{cov}(Z) \delta_t,\delta_s \rangle_{E,E'}= \sup_{e_1',e_2' \in B_{C(T)'}} \langle \textup{cov}(Z) e_1',e_2' \rangle_{E,E'}.
\end{align*}
In summary we obtain
\begin{align*}
 \sup_{t \in T} |k_Z(t,t)|=\Vert \textup{cov}(Z) \Vert_{C(T)' \to C(T)}.
\end{align*}
By \cite[Lemma 30]{daniel}, we have 
\begin{align*}
k_Z=\textup{cov}(X-\Xbn) = \textup{cov}(X)-\textup{cov}(\Xbn)=k_X-k_{\Xbn}.
\end{align*}
\end{proof}

\begin{remark}
We obtain Equation \eqref{eq:StetigeNorm} by setting $d_j=1$ for all $j \in \mathbb{N}$ in Lemma \ref{lem:stetigeNorm}. Additionally, setting $n=0$ in Lemma \ref{lem:stetigeNorm} yields $\Vert \textup{cov}(X) \Vert_{C(T)' \to C(T)} = \sup_{t \in T} | k_X(t,t) |$.
\end{remark}  

\begin{lemma}\label{lem:Eigenvalue matches Norm}
Let $X$ be a Gaussian random variable, $E=L^2(\lambda)$ and denote by $\lambda_j$ the monotone decreasing eigenvalues of $\textup{cov}(X):L^2(\lambda) \to L^2(\lambda)$. Furthermore, we denote by $H_j$ the eigenspace associated to $\lambda_j$, we write $d_j:=\textup{dim}(H_j)$, and let $(e_{j,m})_{m=1}^{d_j} \subseteq H_j$ be an ONB of $H_j$. For $(e_{j,m}') = \langle e_{j,m}, \mycdot \rangle \in L^2(\lambda)'$ we then have
\begin{align*}
\Xbn= \sum_{j=1}^n \sqrt{\lambda_j} \sum_{m=1}^{d_j} r_{j,m} e_{j,m} \quad \textup{and} \quad \Vert \textup{cov}(X)-\textup{cov}(\Xbn) \Vert_{L^2(\lambda) \to L^2(\lambda)} = \lambda_{n+1}.
\end{align*}
\end{lemma}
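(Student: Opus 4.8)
The plan is to reduce everything to the abstract machinery already developed for the reproducing covariance space $H_X$ and to \hyperref[ass:V]{Assumption V}, and then identify all the objects concretely in the $L^2(\lambda)$ setting using the spectral decomposition of $\textup{cov}(X)$. First I would recall the Karhunen--Lo\`eve-type representation of $X$: since $\textup{cov}(X):L^2(\lambda)\to L^2(\lambda)$ is nuclear, symmetric and positive, the eigenfunctions $(e_{j,m})$ (over all $j$ with $1\le m\le d_j$) form an ONB of $\overline{\ran\textup{cov}(X)}$, and one has $\iota X=\sum_{j}\sum_{m=1}^{d_j}\sqrt{\lambda_j}\,r_{j,m}e_{j,m}$ with $r_{j,m}\sim\mathcal N(0,1)$ i.i.d., where $r_{j,m}$ is (a multiple of) the random coefficient $\langle X,e_{j,m}\rangle_{L^2(\lambda)}$. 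The key computation here is that for $e'_{j,m}=\langle e_{j,m},\mycdot\rangle$ one has $h_{j,m}=\int_\Omega e'_{j,m}(X)\,\iota X\,\dx\mu=\textup{cov}(X)e'_{j,m}=\lambda_j e_{j,m}$, so $h_{j,m}$ is a nonzero scalar multiple of $e_{j,m}$; hence $(e_{j,m}/\sqrt{\lambda_j})$, suitably normalised in $H_X$, is an ONB of $\overline{\spann\{h_{j,m}\}}$, and the span condition in \hyperref[ass:V]{Assumption V} holds because the eigenspaces $H_j$ are exactly the blocks being added. Thus \hyperref[ass:V]{Assumption V} is satisfied with $v_{j,m}=\iota(e_{j,m}/\sqrt{\lambda_j})$ up to the identification $H_X\subset E''$.

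Next I would invoke Lemma \ref{lem:ONB}: it gives i.i.d.\ $\mathcal N(0,1)$ variables (again denoted $r_{j,m}$) with $\iota\Xbn=\sum_{j=1}^n\sum_{m=1}^{d_j}r_{j,m}v_{j,m}$. Translating back through the isometry $\iota$ and the scaling $v_{j,m}=\iota(e_{j,m}/\sqrt{\lambda_j})$ yields $\Xbn=\sum_{j=1}^n\sqrt{\lambda_j}\sum_{m=1}^{d_j}r_{j,m}e_{j,m}$, which is the first claimed identity. (One should check the hypothesis $\Vert\textup{cov}(X)-\textup{cov}(\Xbn)\Vert_{E'\to E}\to 0$ of the second half of Lemma \ref{lem:ONB} if one wants the companion series for $\iota X$; this follows from nuclearity, since $\lambda_j\to 0$, and in fact is subsumed by the norm computation below.)

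For the norm identity, set $Z:=X-\Xbn$. By \cite[Lemma 30]{daniel}, $\textup{cov}(Z)=\textup{cov}(X)-\textup{cov}(\Xbn)$, and by \eqref{eq:covX}--\eqref{eq:covXbn} (or directly from the two series representations above) this operator acts on $L^2(\lambda)$ as the spectral truncation that kills the eigenspaces $H_1,\dots,H_n$ and acts as multiplication by $\lambda_j$ on $H_j$ for $j\ge n+1$. Since $\textup{cov}(Z)$ is self-adjoint positive with spectrum $\{\lambda_j:j\ge n+1\}\cup\{0\}$ and $(\lambda_j)$ is monotone decreasing, its operator norm equals $\sup_{j\ge n+1}\lambda_j=\lambda_{n+1}$. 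This gives $\Vert\textup{cov}(X)-\textup{cov}(\Xbn)\Vert_{L^2(\lambda)\to L^2(\lambda)}=\lambda_{n+1}$.

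The routine parts are the isometry bookkeeping between $W_{\Xbn}$, $H_{\Xbn}$ and $L^2(\mu)$ (all already packaged in Lemma \ref{lem:ONB} and \cite{daniel}) and the elementary spectral-norm fact for a diagonal positive operator. The one place that needs genuine care -- the \emph{main obstacle} -- is verifying that \hyperref[ass:V]{Assumption V} holds, i.e.\ that the orthonormal system built from $(h_{j,m})$ in $H_X$ really is the rescaled eigenbasis and that the nested-span condition is met; this hinges on the identity $h_{j,m}=\lambda_j e_{j,m}$ and on the eigenspaces being precisely the conditioning blocks, and it is also where the degenerate possibility $\lambda_j=0$ (finite rank, so only finitely many blocks are nontrivial) must be handled, either by discarding trivial eigenspaces or by noting $\Xbn=X$ from some index on.
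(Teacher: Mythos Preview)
Your approach is exactly the paper's: compute $h_{j,m}=\textup{cov}(X)e_{j,m}=\lambda_j e_{j,m}$, conclude that \hyperref[ass:V]{Assumption V} holds, apply Lemma~\ref{lem:ONB} for the series representation of $\Xbn$, and read off the operator norm from the spectral truncation. The only slip is the normalisation of the ONB in $H_X$: since
\[
k(e_1',e_2')=\langle \textup{cov}(X)e_1',e_2'\rangle=\sum_{j}\lambda_j\sum_{m=1}^{d_j}\langle e_1',e_{j,m}\rangle\langle e_2',e_{j,m}\rangle,
\]
the orthonormal system is $v_{j,m}=\iota(\sqrt{\lambda_j}\,e_{j,m})$ (this is what the paper cites from \cite[Example~45]{daniel}), not $\iota(e_{j,m}/\sqrt{\lambda_j})$. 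With your scaling, Lemma~\ref{lem:ONB} would yield $\Xbn=\sum_{j\le n}\sum_m r_{j,m}\,e_{j,m}/\sqrt{\lambda_j}$, contradicting the formula you then state. Once $v_{j,m}=\iota(\sqrt{\lambda_j}\,e_{j,m})$ is used, the back-translation gives the correct $\Xbn=\sum_{j=1}^n\sqrt{\lambda_j}\sum_{m=1}^{d_j}r_{j,m}e_{j,m}$ and the rest of your argument (including the $\lambda_j=0$ caveat and the spectral-norm computation) goes through and matches the paper.
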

\begin{proof}
Because $\textup{cov}(X)$ is a symmetric, nuclear operator, see \cite{CovarianceSymmetric}, we can write 
\begin{align*}
\textup{cov}(X) f = \sum_{j=1}^\infty \lambda_j \sum_{m=1}^{d_j} \langle f , e_{j,m} \rangle_{L^2(\lambda)} e_{j,m}, \quad \textup{for} \, \, f \in L^2(\lambda).
\end{align*}  
By \cite[Example 45]{daniel} we know that $(\sqrt{\lambda_j} e_{j,m}) \subset H_X$ is an ONB and 
\begin{align*}
h_{j,m}:= \int_\Omega e_{j,m}'(X) \iota X \, \textup{d} \mu = \textup{cov}(X) e_{j,m} = \lambda_{j} e_{j,m}.
\end{align*}
Consequently, \hyperref[ass:V]{Assumption V} is satisfied and Lemma \ref{lem:ONB} thus shows
\begin{align*}
\Xbn= \sum_{j=1}^n \sqrt{\lambda_j} \sum_{m=1}^{d_j} r_{j,m} e_{j,m}.
\end{align*}
We conclude 
\begin{align*}
\sup_{\Vert f \Vert_{L^2(\lambda)} \leq 1 } \Vert \textup{cov}(X)  - \textup{cov}(\Xbn) \Vert_{L^2(\lambda)} = \sup_{\Vert f \Vert_{L^2(\lambda)} \leq 1 } \left\Vert \sum_{j=n+1}^\infty \lambda_j \sum_{m=1}^{d_j} \langle f , e_{j,m} \rangle_{L^2(\lambda)} e_{j,m} \right\Vert_{L^2(\lambda)}= \max_{j \geq n+1} \lambda_j.
\end{align*}
Because $\lambda_j>\lambda_{j+1}>0$, we conclude the assertion.
\end{proof}

\paragraph*{Acknowledgements} Daniel Winkle acknowledges funding by the International Max Planck Research School
for Intelligent Systems (IMPRS-IS).

\bibliographystyle{plain}
\bibliography{daniel_refs}

\begin{appendices}
\section{Appendix}\label{sec:appendix}\label{sec:6}
Here are all the technical calculations are necessary to prove all the claims, but they do not provide further insights. 
\begin{lemma}\label{lem:summezuintegral}
Given a decreasing sequence $(c_j)$ with $c_j>0$ and $f:[0,\infty) \to [0,\infty)$ a twice differentiable, convex, and monotone decreasing function, with $f(j)=c_j$ for all $j \in \mathbb{N}$, we have 
\begin{align*}
\sum_{j=n+1}^\infty \sqrt{c_{j-1}-c_j} \leq \int_{n-1}^\infty \sqrt{|f'(t)|} \, \textup{d} t \quad \textup{for all} \, \,  n\geq 1.
\end{align*}   
\end{lemma}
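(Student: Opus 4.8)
The plan is to reduce each summand to a value of $\sqrt{|f'|}$ via the mean value theorem and then dominate it by an integral over a unit interval, exploiting monotonicity of $\sqrt{|f'|}$. Concretely, first I would note that since $f$ is monotone decreasing we have $f'\le 0$ on $[0,\infty)$, hence $|f'|=-f'$, and since $f$ is convex, $f'$ is non-decreasing, so $-f'=|f'|$ is non-increasing; consequently $t\mapsto\sqrt{|f'(t)|}$ is non-increasing on $[0,\infty)$. Next, for each $j\ge 1$ the mean value theorem applied to $f$ on $[j-1,j]$ (using $f(j-1)=c_{j-1}$, $f(j)=c_j$) yields some $\xi_j\in[j-1,j]$ with $c_{j-1}-c_j=f(j-1)-f(j)=-f'(\xi_j)=|f'(\xi_j)|$, so that $\sqrt{c_{j-1}-c_j}=\sqrt{|f'(\xi_j)|}$.

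The key comparison step is then: since $\xi_j\ge j-1$ and $\sqrt{|f'|}$ is non-increasing, for every $t\in[j-2,j-1]$ we have $t\le j-1\le\xi_j$ and therefore $\sqrt{|f'(\xi_j)|}\le\sqrt{|f'(t)|}$. Integrating this inequality over the unit interval $[j-2,j-1]$ gives
\begin{align*}
\sqrt{c_{j-1}-c_j}=\sqrt{|f'(\xi_j)|}=\int_{j-2}^{j-1}\sqrt{|f'(\xi_j)|}\,\textup{d}t\le\int_{j-2}^{j-1}\sqrt{|f'(t)|}\,\textup{d}t.
\end{align*}
Summing over $j\ge n+1$ and observing that the intervals $[j-2,j-1]$ for $j=n+1,n+2,\dots$ are exactly $[n-1,n],[n,n+1],\dots$, whose disjoint union is $[n-1,\infty)$, the right-hand sides telescope into a single integral:
\begin{align*}
\sum_{j=n+1}^\infty\sqrt{c_{j-1}-c_j}\le\sum_{j=n+1}^\infty\int_{j-2}^{j-1}\sqrt{|f'(t)|}\,\textup{d}t=\int_{n-1}^\infty\sqrt{|f'(t)|}\,\textup{d}t,
\end{align*}
which is the claimed bound; note $n\ge 1$ ensures $n-1\ge 0$ so that the integral stays within the domain of $f$.

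I do not anticipate a serious obstacle here: the argument is essentially a monotone rearrangement combined with the integral test. The only points requiring a little care are the off-by-one bookkeeping in the interval endpoints (making sure the lower limit comes out as $n-1$ rather than $n$ or $n-2$), and recording the sign of $f'$ so that $|f'|=-f'$ and the mean value theorem produce a nonnegative quantity whose square root is exactly $\sqrt{c_{j-1}-c_j}$. Both are routine given the stated hypotheses on $f$.
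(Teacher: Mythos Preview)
Your proposal is correct and follows essentially the same route as the paper: apply the mean value theorem on $[j-1,j]$ to write $c_{j-1}-c_j=-f'(\xi_j)$, use convexity so that $-f'$ is non-increasing to dominate by $-f'(t)$ for $t\in[j-2,j-1]$, and then sum the resulting unit-interval integrals. Your write-up is in fact a bit more careful about signs and the off-by-one bookkeeping than the paper's sketch.
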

\begin{proof}
This is a consequence of the mean value theorem, we note that we have by the property of $f$ being monotone decreasing and convex
\begin{align*}
c_{j-1}-c_j=f(j-1)-f(j) \leq \sup_{\xi \in [j-1,j]} -f'(\xi) \leq -f'(j-1) \leq -f'(t) 
\end{align*} 
for all $t \in [j-2,j-1]$, thus 
\begin{align*}
\sum_{j=n+1}^\infty \sqrt{c_{j-1}-c_j} \leq \int_{n-1}^\infty \sqrt{|f'(t)|} \, \textup{d}t.
\end{align*}
\end{proof}

\begin{corollary}\label{cor:gaussian}
Let $C_1,C_2>0$, and $d>1$. For $n>\left( \frac{11}{C_2} (d-1) \right)^d +1$ we then have
\begin{align*}
\sum_{j=n+1}^\infty \sqrt{C_1\mathrm{e}^{-C_2 (j-1)^{1/d}} - C_1\mathrm{e}^{-C_2 j^{1/d}}} \leq \frac{11 \sqrt{C_1C_2d}}{10}   \left(\frac{C_2}{2} \right)^{d-2}  (n-1)^{\frac{d-1}{2d}}  \mathrm{e}^{-\frac{C_2}{2}(n-1)^{1/d}}.
\end{align*} 
\end{corollary}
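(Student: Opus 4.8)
The strategy is to apply Lemma~\ref{lem:summezuintegral} with the smooth interpolant $f(t) := C_1 \mathrm{e}^{-C_2 t^{1/d}}$ of the sequence $c_j := C_1\mathrm{e}^{-C_2 j^{1/d}}$, so that $f(j)=c_j$ for all $j$. First I would verify the hypotheses of that lemma: $f$ is positive and monotone decreasing, and for $t>0$ it is twice differentiable with
\begin{align*}
f'(t) = -\tfrac{C_1 C_2}{d}\,t^{1/d-1}\mathrm{e}^{-C_2 t^{1/d}}, \qquad f''(t) = \tfrac{C_1 C_2}{d^2}\,t^{1/d-2}\mathrm{e}^{-C_2 t^{1/d}}\bigl((d-1)+C_2 t^{1/d}\bigr),
\end{align*}
so $f''>0$ since $d>1$ makes both summands in the bracket positive; hence $f$ is convex. (The failure of differentiability at $t=0$ is harmless: the proof of Lemma~\ref{lem:summezuintegral} only uses $f$ on $\bigcup_{j\ge n+1}[j-2,j]=[n-1,\infty)$, or one may replace $f$ on $[0,n-2]$ by a convex decreasing $C^2$-extension.) Lemma~\ref{lem:summezuintegral} then reduces the claim to a single integral estimate:
\begin{align*}
\sum_{j=n+1}^\infty \sqrt{c_{j-1}-c_j} \;\le\; \int_{n-1}^\infty \sqrt{|f'(t)|}\, \textup{d}t \;=\; \sqrt{\tfrac{C_1 C_2}{d}}\int_{n-1}^\infty t^{\frac{1}{2d}-\frac12}\,\mathrm{e}^{-\frac{C_2}{2}t^{1/d}}\, \textup{d}t.
\end{align*}

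Next I would bring this integral into the form of an upper incomplete Gamma integral by the substitution $w=\frac{C_2}{2}t^{1/d}$ (equivalently $t=(2w/C_2)^d$); a short computation shows that the polynomial exponent collapses to $\tfrac{d-1}{2}$, leaving
\begin{align*}
\sqrt{\tfrac{C_1 C_2}{d}}\int_{n-1}^\infty t^{\frac{1}{2d}-\frac12}\mathrm{e}^{-\frac{C_2}{2}t^{1/d}}\, \textup{d}t \;=\; \sqrt{C_1 C_2 d}\,\Bigl(\tfrac{2}{C_2}\Bigr)^{\frac{d+1}{2}}\int_{w_0}^\infty w^{\frac{d-1}{2}}\mathrm{e}^{-w}\, \textup{d}w, \qquad w_0 := \tfrac{C_2}{2}(n-1)^{1/d}.
\end{align*}
The core estimate is then the classical tail bound for $\int_{w_0}^\infty w^{s}\mathrm{e}^{-w}\,\textup{d}w$ with $s=\tfrac{d-1}{2}$: integrating by parts once gives $\int_{w_0}^\infty w^s\mathrm{e}^{-w}\,\textup{d}w = w_0^s\mathrm{e}^{-w_0}+s\int_{w_0}^\infty w^{s-1}\mathrm{e}^{-w}\,\textup{d}w$, and bounding $w^{s-1}\le w^s/w_0$ on $[w_0,\infty)$ yields the self-improving bound $\int_{w_0}^\infty w^s\mathrm{e}^{-w}\,\textup{d}w \le w_0^s\mathrm{e}^{-w_0}/(1-s/w_0)$ for $w_0>s$. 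Here the hypothesis $n>\bigl(\tfrac{11}{C_2}(d-1)\bigr)^d+1$ enters in exactly the form $(n-1)^{1/d}>\tfrac{11}{C_2}(d-1)$, i.e.\ $w_0>11\,s$, so the correction factor $1/(1-s/w_0)$ is at most $\tfrac{11}{10}$.

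It then remains only to substitute $w_0^s=(C_2/2)^{(d-1)/2}(n-1)^{(d-1)/(2d)}$ and $\mathrm{e}^{-w_0}=\mathrm{e}^{-\frac{C_2}{2}(n-1)^{1/d}}$ and collect the accumulated constants $\sqrt{C_1 C_2 d}$, $(2/C_2)^{(d+1)/2}$, $(C_2/2)^{(d-1)/2}$, and $\tfrac{11}{10}$. The one genuinely delicate part of the argument is this final constant bookkeeping — in particular keeping careful track of the powers of $C_2/2$ so that they assemble into the stated prefactor — since the substitution and the derivative computations verifying the hypotheses of Lemma~\ref{lem:summezuintegral} are routine. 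An equivalent route that sidesteps the substitution is to integrate $\int_{n-1}^\infty\sqrt{|f'(t)|}\,\textup{d}t$ by parts directly, writing the integrand as $t^{(d-1)/(2d)}\cdot\bigl(t^{1/d-1}\mathrm{e}^{-\frac{C_2}{2}t^{1/d}}\bigr)$, antidifferentiating the second factor, and absorbing the lower-order remainder back into the original integral via $t^{-1/d}\le(n-1)^{-1/d}$; the same largeness hypothesis again forces the resulting self-improvement factor to be at most $\tfrac{11}{10}$.
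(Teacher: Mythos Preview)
Your approach is essentially the paper's: apply Lemma~\ref{lem:summezuintegral} with $f(t)=C_1\mathrm{e}^{-C_2 t^{1/d}}$, reduce to an upper incomplete Gamma integral via the substitution $w=\tfrac{C_2}{2}t^{1/d}$, and then bound that tail. The paper splits the substitution in two steps ($u=t^{1/d}$, then $x=\tfrac{C_2}{2}u$) and cites an external inequality \cite[Inequality~(3.5)]{IncompleteGammaFunction} for the incomplete Gamma tail, whereas you supply the elementary integration-by-parts self-improvement $\int_{w_0}^\infty w^s\mathrm{e}^{-w}\,\textup{d}w\le w_0^s\mathrm{e}^{-w_0}/(1-s/w_0)$ directly; your explicit verification of convexity is also a detail the paper omits.

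A word on the constant bookkeeping you rightly flag as delicate: if you carry your computation through you will obtain the prefactor $(C_2/2)^{-1}$, not $(C_2/2)^{d-2}$ as in the stated corollary. The discrepancy traces to a slip in the paper's second substitution, where $u^{(d-1)/2}=\bigl(\tfrac{2}{C_2}x\bigr)^{(d-1)/2}$ is recorded as $\bigl(\tfrac{C_2}{2}x\bigr)^{(d-1)/2}$; your route is correct and the stated exponent $d-2$ is not. This does not affect any downstream use of the corollary beyond the value of the constant.
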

\begin{proof}
We apply Lemma \ref{lem:summezuintegral} with $f(t):= C_1 \mathrm{e}^{-C_2 t^{1/d}}$. We have $f'(t)=-C_2C_1 \frac{t^{1/d-1}}{d} \mathrm{e}^{-C_2 t^{1/d}}$ and obtain 
\begin{align*}
 \sum_{j=n+1}^\infty \sqrt{C_1\mathrm{e}^{-C_2 (j-1)^{1/d}} - C_1\mathrm{e}^{-C_2 j^{1/d}} } \leq 
\sqrt{C_2C_1} \int_{n-1}^\infty \sqrt{\frac{t^{1/d-1}}{d}} \mathrm{e}^{-\frac{C_2}{2} t^{1/d}} \, \textup{d}t.
\end{align*}
Substituting $u=t^{1/d}$ we obtain  
\begin{align*}
\int_{n-1}^\infty \sqrt{\frac{t^{1/d-1}}{d}} \mathrm{e}^{-\frac{C_2}{2} t^{1/d}} \, \textup{d}t = \int_{(n-1)^{1/d}}^\infty \sqrt{\frac{u^{1-d}}{d}} \mathrm{e}^{-\frac{C_2}{2}u} d u^{d-1} \, \textup{d}u = \int_{(n-1)^{1/d}}^\infty \sqrt{d u^{d-1}} \mathrm{e}^{-\frac{C_2}{2} u } \, \textup{d} u.
\end{align*} 
Now substituting \( \frac{C_2}{2} u = x \) leads to 
\begin{align*}
\int_{(n-1)^{1/d}}^\infty \sqrt{d u^{d-1}} \mathrm{e}^{-\frac{C_2}{2} u} \, \mathrm{d}u 
&= \sqrt{d} \int_{\frac{C_2}{2}(n-1)^{1/d}}^\infty \left(\frac{C_2}{2} x\right)^{\frac{d-1}{2}} \mathrm{e}^{-x} \frac{2}{C_2} \, \mathrm{d}x.
\end{align*}
This integral matches the incomplete gamma function. Applying \cite[Inequality (3.5)]{IncompleteGammaFunction}, with \( B = \frac{11}{10} \), we obtain the following inequality for \( C_2 (n-1)^{1/d} > 11(d-1) \) 
\begin{align*}
\sqrt{ d \left(\frac{C_2}{2} \right)^{d-3}} \int_{\frac{C_2}{2}(n-1)^{1/d}}^\infty x^{\frac{d-1}{2}} \mathrm{e}^{-x} \, \mathrm{d}x \leq \frac{11 \sqrt{d}}{10}   \left(\frac{C_2}{2} \right)^{d-2}  (n-1)^{\frac{d-1}{2d}}  \mathrm{e}^{-\frac{C_2}{2}(n-1)^{1/d}},
\end{align*} 
thus the assertion follows.
\end{proof}

\begin{lemma}\label{lem:einfacheUngleichung}
For $0 \leq x < 1/2$ we have 
\begin{align*}
0 \leq -x -1/2 \ln(1-2x) \leq \frac{x^2}{1-2x}.
\end{align*}
\end{lemma}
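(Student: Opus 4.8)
The plan is to prove both inequalities by elementary one-variable calculus on the interval $[0,1/2)$; there is essentially no deep content here, so the work consists of choosing the right auxiliary functions and differentiating.

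First I would handle the lower bound. Set $g(x):=-x-\tfrac12\ln(1-2x)$ for $x\in[0,1/2)$. Then $g(0)=0$ and $g'(x)=-1+\tfrac{1}{1-2x}=\tfrac{2x}{1-2x}\geq 0$ on $[0,1/2)$, so $g$ is nondecreasing and therefore $g(x)\geq g(0)=0$, which is the left inequality.

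Next I would handle the upper bound. Set $h(x):=\tfrac{x^2}{1-2x}-g(x)$, again with $h(0)=0$. A direct computation gives $\tfrac{\mathrm d}{\mathrm dx}\tfrac{x^2}{1-2x}=\tfrac{2x(1-x)}{(1-2x)^2}$, hence $h'(x)=\tfrac{2x(1-x)}{(1-2x)^2}-\tfrac{2x}{1-2x}=\tfrac{2x^2}{(1-2x)^2}\geq 0$ on $[0,1/2)$. Thus $h$ is nondecreasing and $h(x)\geq h(0)=0$, which is the claimed bound $g(x)\leq \tfrac{x^2}{1-2x}$.

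As an alternative and a sanity check, one can expand in power series on $[0,1/2)$: from $-\tfrac12\ln(1-2x)=\sum_{k\geq1}\tfrac{2^{k-1}}{k}x^k$ one gets $g(x)=\sum_{k\geq2}\tfrac{2^{k-1}}{k}x^k\geq 0$, while $\tfrac{x^2}{1-2x}=\sum_{k\geq2}2^{k-2}x^k$; comparing coefficients, the upper bound reduces to $\tfrac{2^{k-1}}{k}\leq 2^{k-2}$, i.e.\ $k\geq2$, which holds termwise. The only points needing a little care are that the derivative identities are only valid up to (but not including) $x=1/2$, and that the linear terms cancel, which is precisely what makes the estimate quadratic near the origin; neither is a genuine obstacle.
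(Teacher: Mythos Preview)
Your proof is correct and essentially identical to the paper's own argument: both inequalities are obtained by checking that the relevant auxiliary function vanishes at $0$ and has nonnegative derivative on $[0,1/2)$, yielding exactly the same derivative expressions $\tfrac{2x}{1-2x}$ and $\tfrac{2x^2}{(1-2x)^2}$. The power-series alternative you add is a nice bonus but not needed.
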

\begin{proof}
We start by showing that $f(x):=  -x-1/2 \ln(1-2x) \geq 0$. We note that $f(0)=0$ and $f'(x)=-1+\frac{1}{1-2x}=\frac{2x}{1-2x} \geq 0$ for $x \in [0,1/2)$, thus we conclude $f(x) \geq 0$.

Next we prove $g(x):=   \frac{x^2}{1-2x}+x+1/2 \ln(1-2x) \geq 0$. We note that $g(0)=0$ and $g'(x)=\frac{2x^2}{(1-2x)^2} \geq 0$ for $x \in [0,1/2)$, thus the assertion follows.
\end{proof}

\begin{lemma}\label{lem:concentration inequality}
Let $Z:\Omega \to \mathbb{R}$ be a random variable with $\mathbb{E}(Z)=0$. If for fixed $v,c>0$, and all $0 < x < 1/c$ we have
\begin{align*}
\ln\left( \mathbb{E} \mathrm{e}^{xZ}  \right) \leq \frac{vx^2}{2(1-cx)},
\end{align*}
then for any $\tau>0$, we have
\begin{align*}
\mu(Z \geq c\tau +\sqrt{2v\tau} ) \leq \mathrm{e}^{-\tau}.
\end{align*}
\end{lemma}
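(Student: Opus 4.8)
The plan is to run the standard Cramér–Chernoff argument and then make an explicit, near-optimal choice of the free parameter so that the constant comes out exactly as stated. First I would observe that for any $0 < x < 1/c$, Markov's inequality applied to the nonnegative random variable $\mathrm{e}^{xZ}$ gives, for every $s>0$,
\begin{align*}
\mu(Z \ge s) \le \mathrm{e}^{-xs}\,\mathbb{E}\,\mathrm{e}^{xZ} \le \exp\!\Bigl(-xs + \tfrac{vx^2}{2(1-cx)}\Bigr),
\end{align*}
where the second inequality is precisely the hypothesis on $\ln\bigl(\mathbb{E}\,\mathrm{e}^{xZ}\bigr)$.

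Second, I would specialize $s := c\tau + \sqrt{2v\tau}$ and pick $x$ so that the exponent equals $-\tau$ exactly. Writing $\sigma := \sqrt{2\tau/v}$ (so that $\tau = v\sigma^2/2$, $\sqrt{2v\tau} = v\sigma$, and hence $s = v\sigma(1 + c\sigma/2)$), the choice $x := \sigma/(1+c\sigma)$ lies in $(0,1/c)$ and satisfies $1 - cx = 1/(1+c\sigma)$. A short computation then gives $\frac{vx^2}{2(1-cx)} = \frac{v\sigma^2}{2(1+c\sigma)}$ and $xs = \frac{v\sigma^2(1+c\sigma/2)}{1+c\sigma}$, so that
\begin{align*}
xs - \frac{vx^2}{2(1-cx)} = \frac{v\sigma^2}{1+c\sigma}\Bigl(1 + \tfrac{c\sigma}{2} - \tfrac12\Bigr) = \frac{v\sigma^2}{1+c\sigma}\cdot\frac{1+c\sigma}{2} = \frac{v\sigma^2}{2} = \tau.
\end{align*}
Substituting this into the bound from the first step yields $\mu\bigl(Z \ge c\tau + \sqrt{2v\tau}\bigr) \le \mathrm{e}^{-\tau}$, which is the assertion.

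I do not expect a real obstacle here: the hypothesis is already tailored to a Bernstein-type conclusion, and the only mild point to check is that the chosen $x$ stays in the admissible range $(0,1/c)$, which is immediate from $c\sigma < 1+c\sigma$. One could instead optimize $x$ over $(0,1/c)$ and identify the Legendre transform of $x \mapsto \frac{vx^2}{2(1-cx)}$, but the explicit choice above sidesteps that and makes the constant sharp. Note also that the centering assumption $\mathbb{E}(Z)=0$ is not actually used for this one-sided bound — it merely ensures $\psi(x)\ge 0$ and is the natural normalization — so I would not rely on it in the argument.
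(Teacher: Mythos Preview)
Your proof is correct. Both you and the paper start from the same Cramér--Chernoff bound via Markov's inequality, and in fact your explicit choice $x=\sigma/(1+c\sigma)$ coincides exactly with the paper's optimizer $x^*=c^{-1}\bigl(1-\sqrt{v/(2\varepsilon c+v)}\bigr)$ once one substitutes $\varepsilon=c\tau+\sqrt{2v\tau}$ and simplifies. The difference is purely in execution: the paper takes the route you explicitly set aside, namely locating the maximizer of $h_\varepsilon(x)=x\varepsilon-\tfrac{vx^2}{2(1-cx)}$ by solving $h_\varepsilon'(x)=0$, checking the second derivative, and then working through a page of algebra to identify $h_\varepsilon(x^*)$ and verify that it equals $\tau$ at $\varepsilon=c\tau+\sqrt{2v\tau}$. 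Your parameterization via $\sigma=\sqrt{2\tau/v}$ collapses all of that into three lines, at the (harmless) cost of not proving that the resulting bound is the best one obtainable from the hypothesis. Your remark that $\mathbb{E}(Z)=0$ is not used is also accurate; the paper does not invoke it either.
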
 
\begin{proof}
This proof is taken from \cite[Lemma 8]{Massart2}, we also add all the necessary technical details. We use the well known Markov inequality and obtain with the assumptions for all $x\geq 0$
\begin{align*}
\mu \left( Z \geq \varepsilon \right) = \mu\left( \mathrm{e}^{x Z } \geq \mathrm{e}^{ x \varepsilon} \right) \leq \mathbb{E} \mathrm{e}^{xZ} \mathrm{e}^{-x \varepsilon}= \mathrm{e}^{   -x \varepsilon+\ln\left(\mathbb{E}\mathrm{e}^{x Z} \right) } \leq \mathrm{e}^{  -x \varepsilon+ \frac{vx^2}{2(1-cx)}  }
\end{align*} 
for all $\varepsilon>0$. We define $h_\varepsilon:(0,\infty) \to \mathbb{R}$ by
\begin{align*}
h_\varepsilon (x):= \left( x \varepsilon - \frac{v x^2}{2(1-cx)} \right).
\end{align*}
Next we show that the function $h(x)$ is maximized at $x^*:=c^{-1}\left(1-\sqrt{v/(2 \varepsilon c + v)} \right)<1/c$. To this end we note that 
\begin{align*}
h_\varepsilon'(x)= \varepsilon - \frac{vx(2-cx)}{2(1-cx)^2}.
\end{align*}  
Moreover, the denominator at $x^*$ is
\begin{align*}
2 (1-c x^*)^2&= \frac{2  v}{2 \varepsilon c + v} ,
\end{align*}
while the enumerator is
\begin{align*}
v x^*(2-c x^*)&= \frac{v}{c} \left( 1- \sqrt{\frac{v}{2 \varepsilon c + v}} \, \right) \left( 1+\sqrt{\frac{v}{2 \varepsilon c + v}} \, \right) = \frac{2 \varepsilon v}{2 \varepsilon c + v}.
\end{align*}
Inserting both equations in $h_{\varepsilon}'$ gives $h_\varepsilon'(x^*)= \varepsilon- \varepsilon=0$.
To verify that $h_\varepsilon''(x^*)<0$, we note that the second derivative is given by 
\begin{align*}
h_\varepsilon''(x)=\frac{-v}{(1-cx)^3}.
\end{align*}
Inserting $x^*$ gives 
\begin{align*}
h''_\varepsilon(x^*)=-v \left(\frac{v}{2 \varepsilon c + v} \right)^{-3/2} <0.
\end{align*}
Thus $x^*$ maximizes $h_\varepsilon$.

In the next step we calculate $h_\varepsilon(x^*)$. To this end we set 
\begin{align*}
t:=\sqrt{\frac{v}{2\varepsilon c+v}} 
\end{align*}
and note that $x^*=\frac{1-t}{c}$ and $1-cx^*=t$ holds true. 
We thus obtain 
\begin{align*}
h_\varepsilon(x^*)= \varepsilon \frac{1-t}{c}- \frac{v}{2} \frac{(1-t)^2}{tc^2} = \frac{2c\varepsilon(1-t)t}{2c^2t}- \frac{v(1-t)^2}{2c^2 t}=\frac{(1-t)(2 \varepsilon c t - v(1-t))}{2c^2 t}.
\end{align*}
Since we have $(2\varepsilon c + v)t=v/t$ we conclude
\begin{align*}
2 \varepsilon c t - v(1-t)=(2\varepsilon c + v)t -v = v \frac{1-t}{t}. 
\end{align*}
It follows that 
\begin{align*}
h_\varepsilon(x^*)=\frac{(1-t)(v(1-t)/t)}{2 c^2 t} = \frac{v (1-t)^2}{2 c^2 t^2}.
\end{align*}
For $u:=1/t=\sqrt{1+\frac{2\varepsilon c}{v}}$ we now have 
\begin{align*}
\frac{(1-t)^2}{t^2}= \left( \frac{1}{t}-1 \right)^2=(u-1)^2,
\end{align*}
and with $u^2+1=\frac{2v+2 \varepsilon c}{v}$
\begin{align*}
h_\varepsilon(x^*)=\frac{v}{2c^2}(u-1)^2= \frac{v(u^2-2u+1)}{2c^2}= \frac{2(\varepsilon c +v)-2vu}{2c^2}= \frac{\varepsilon c + v - v u }{c^2}.
\end{align*}
Multiplying numerator and denominator by $\varepsilon c + v + vu$ gives 
\begin{align*}
h_\varepsilon(x^*)=\frac{(\varepsilon c+v-v u)(\varepsilon c + v + v u)}{c^2(\varepsilon c+v+vu)}=\frac{(\varepsilon c + v)^2-(vu)^2}{c^2(\varepsilon c + v+vu)}.
\end{align*}
Moreover, we have 
\begin{align*}
(\varepsilon c+ v)^2-(v u)^2= \varepsilon^2c^2+2 \varepsilon c v+v^2-v^2\left(1+\frac{2\varepsilon c }{v} \right)= \varepsilon^2 c^2
\end{align*}
and hence 
\begin{align*}
h_\varepsilon(x^*)=\frac{\varepsilon^2 c^2}{c^2(\varepsilon c + v+vu)}= \frac{\varepsilon^2}{\varepsilon c + v + v \sqrt{1+\frac{2 \varepsilon c}{v}}}.
\end{align*}
We now show that for for $\varepsilon:= c \tau + \sqrt{2 v \tau}$ we have $h_{c\tau+\sqrt{2 v \tau}}(x^*)=\tau$. To this end we note that
\begin{align*}
\varepsilon^2=\tau( \tau c^2 +2 c \sqrt{2 v \tau} +2 v) \quad  \textup{and} \quad c \varepsilon+v = \tau c^2  + c \sqrt{2 v \tau} +v.
\end{align*}
This gives
\begin{align*}
v\sqrt{1+\frac{2 \varepsilon c}{v}}= \sqrt{v^2+2 \varepsilon c v} = \sqrt{ c^2 v \tau+2 c v \sqrt{2 v \tau}+v^2} = \sqrt{(v+c \sqrt{2 v \tau})^2} = v+c\sqrt{2v \tau},
\end{align*} 
and thus we conclude 
\begin{align*}
h_{c\tau+\sqrt{2 v \tau}}(x^*)=\frac{\tau ( \tau c^2 + 2 c \sqrt{2 v \tau}+2v)}{\tau c^2 + c \sqrt{2 v \tau} + v+v +c \sqrt{2 v \tau}} = \frac{\tau ( \tau c^2 + 2 c \sqrt{2 v \tau}+2v)}{ \tau c^2 + 2 c \sqrt{2 v \tau}+2v}= \tau.
\end{align*}
In summary we obtain 
\begin{align*}
\mu(Z \geq c \tau + \sqrt{2 v \tau} ) \leq \mathrm{e}^{-\tau}.
\end{align*}
\end{proof} 
\end{appendices}

\end{document}